\date{\today}
\newtheorem{localclaim}{Claim}
\newcommand{\distas}[1]{\mathbin{\overset{#1}{\kern\z@\sim}}}
\newcommand{\distras}[1]{
  \mathbin{\overset{#1}{\kern\z@\resizebox{\wd\mybox}{\ht\mysim}{$\sim$}}}
}
    \newtheorem{theorem}{Theorem}
    \newtheorem{lemma}[theorem]{Lemma}
    \newtheorem{proposition}[theorem]{Proposition}
    \newtheorem{corollary}[theorem]{Corollary}
\theoremstyle{definition} 
    \newtheorem{definition}[theorem]{Definition}
    \newtheorem{result}[theorem]{Result}
    \newtheorem{remark}[theorem]{Remark}
    \newtheorem{example}[theorem]{Example}
    \newtheorem{exercise}[theorem]{Exercise}
\def\suchthat{\; : \;}
\newcommand{\E}{\mbox{E}}
\def\Z{\mathbb{Z}}
\def\N{\mathbb{N}}
\def\PP{{\mathcal P}}
\def\R{\mathbb{R}}
\def\Z{\mathbb{Z}}
\def\tends{\rightarrow}
\def\<{\langle}
\def\>{\rangle}
\def\bar{\overline}
\newcommand\Tr{{\mbox{Tr}}}
\newcommand\mnote[1]{} 
\newcommand\be{\begin{equation*}}
\newcommand\ee{\end{equation*}}
\newcommand\ben{\begin{equation}}
\newcommand\een{\end{equation}}
\newcommand\bes{\begin{eqnarray*}}
\newcommand\ees{\end{eqnarray*}}
\newcommand\bex{\begin{exercise}}
\newcommand\eex{\end{exercise}}
\newcommand\beg{\begin{example}}
\newcommand\eeg{\end{example}}
\newcommand\benu{\begin{enumerate}}
\newcommand\eenu{\end{enumerate}}
\newcommand\beit{\begin{itemize}}
\newcommand\eeit{\end{itemize}}
\newcommand\berk{\begin{remark}}
\newcommand\eerk{\end{remark}}
\newcommand\bdefn{\begin{defintion}}
\newcommand\edefn{\end{definition}}
\newcommand\bthm{\begin{theorem}}
\newcommand\ethm{\end{theorem}}
\newcommand\bprf{\begin{proof}}
\newcommand\eprf{\end{proof}}
\newcommand\blem{\begin{lemma}}
\newcommand\elem{\end{lemma}}
\newcommand{\Cov}{\mbox{\rm Cov}}
\newcommand{\sm}{{\raise0.3ex\hbox{$\scriptstyle \setminus$}}}
\def\tends{\rightarrow}
\def\CHI{\mathchoice%
{\raise2pt\hbox{$\chi$}}%
{\raise2pt\hbox{$\chi$}}%
{\raise1.3pt\hbox{$\scriptstyle\chi$}}%
{\raise0.8pt\hbox{$\scriptscriptstyle\chi$}}}
\def\smalloplus{\raise1pt\hbox{$\,\scriptstyle \oplus\;$}}
\title [Linear eigenvalue statistics of Hankel matrices] 
{A non-Gaussian limit for linear eigenvalue statistics of Hankel matrices}
\author{Kiran Kumar A.S$^{\ast}$, Shambhu Nath Maurya$^{\dagger}$, Koushik Saha$^{\ddagger}$\\\\
	Department of Mathematics, 
	Indian Institute of Technology Bombay, Mumbai, India}
\date{\today}
\thanks{ $^{\ast}$kiran [at] math.iitb.ac.in, $^{\dagger}$shambhumath4@gmail.com, $^{\ddagger}$koushik.saha [at] iitb.ac.in}  
\begin{document}

\begin{abstract}


This article focuses on linear eigenvalue statistics of Hankel matrices with independent entries. Using the convergence of moments we show that the linear eigenvalue statistics of Hankel matrices for odd degree monomials with degree greater than or equal to three does not converge in distribution to a Gaussian random variable. This result is a departure from the known results, Liu, Sun and Wang (2012), Kumar and Maurya (2022), of linear eigenvalue statistics of Hankel matrices for even degree monomial test functions, where the limits were Gaussian random variables. 
\end{abstract}

\maketitle

\noindent{\bf Keywords:}
 Linear eigenvalue statistics, Hankel matrix, moment method, non-Gaussian variable, central limit theorem.
 \vskip5pt
 \noindent{\bf AMS 2020 subject classification:} 60B20, 60B10, 60F05.


 
\section{Introduction and main results}


The study of linear eigenvalue statistics is a popular area of research in random matrix theory. For an $n \times n$ matrix $A_n$, the linear eigenvalue statistics of $A_n$ is defined as 
\begin{equation} \label{eqn:LES}
\mathcal{A}_n(\phi)= \sum_{i=1}^{n} \phi(\lambda_i),
\end{equation} 
where $\lambda_1 , \lambda_2 , \ldots , \lambda_n$ are the eigenvalues of $A_n$ and $\phi$ is a `nice' test function. The studies on linear eigenvalue statistics started with the study of central limit theorems for linear eigenvalue statistics of Sample covariance matrix \cite{Arharov_Sample_Cov}, \cite{jonsson}.
The results have been obtained for other important classes of random matrices and test functions. Notable among them are the results for polynomial test functions on Wigner matrices by Sinai and Soshnikov \cite{soshnikov1998tracecentral}, on tridiagonal matrices by Popescu \cite{Popescu}, on Toeplitz matrix by Liu, Sun and Wang \cite{liu2012fluctuations} and on circulant matrices by Bose et al. \cite{a_m&s_circulaun2020}.  For results on fluctuations of linear eigenvalue statistics of Wigner and sample covariance matrices, see \cite{girko}, \cite{bai2004clt} and \cite{lytova2009central}.  

In this paper, we study the linear eigenvalue statistics of Hankel matrices for odd degree monomials and polynomials with odd degree terms. 
Hankel matrices are given by $H_n=(x_{i+j-1})_{i,j=1}^n$, where $(x_i)_{i \geq 1}$ is known as the input sequence. Hankel matrices are an important class of patterned matrices and have wide applications both in pure mathematics and other fields of sciences and engineering. In mathematics, Hankel matrices are best known for their connection to the Hamburger moment problem (see \cite{Shohat_Tamarkin}). They also show up in studies of orthogonal polynomials and Pade's approximation \cite{Hankel_pade}, and in error-correcting codes \cite{Jonckheere}. Hankel matrices  have also found applications in areas as diverse as superconductivity \cite{Hankel_fermi_bose}, macro-economics \cite{Masanao}, image processing \cite{Hankel_image_proc}, spectral learning \cite{Hankel_spectral_learn} and spectroscopy \cite{Hankel_spectrospocy}.

Hankel matrices are closely related to another important class of matrices known as Toeplitz matrices and in most cases, their studies go hand in hand. More specifically, for any Toeplitz matrix $T_n$, $P_nT_n$ is a Hankel matrix and conversely for any Hankel matrix $H_n$, $P_n H_n$ is a  Toeplitz matrix, where $P_n = (\delta_{i-1, n-j} )_{i,j=1}^n$ is the backward identity permutation. Since $P_n^{-1} = P_n$, we obtain that any Hankel matrix (and Toeplitz matrix) is of this form. 

For a sequence of random variables $\{x_i\}_{i \in \Z}$, we define the random Toeplitz matrix as $T_n=(x_{i-j})$ and the random Hankel matrix as $H_n=P_nT_n$. In this article, the Hankel matrices considered are always of the form $H_n= P_nT_n$. For a random Hankel matrix $H_n$, we define 
\begin{equation} \label{eqn:wp_Hn_odd}
	w_p := \Tr(A_n^p),
\end{equation}
where $A_n= H_n /\sqrt{n}$. 

To the best of our knowledge, linear eigenvalue statistics of Toeplitz matrices were first studied by Chatterjee in \cite{chatterjee2009fluctuations} . He showed that for test functions $\phi(x)=x^{p_n}$, where $p_n =o(\log n/ \log \log n)$, the linear eigenvalue statistics of symmetric Toeplitz matrices with Gaussian input entries converge to a Gaussian distribution under the \textit{total variation norm}. Later in 2012, Liu et al. \cite{liu2012fluctuations} studied $\mathcal{A}_n(\phi)$ for band Toeplitz and band Hankel matrices with independent input sequence $\{ x_i\}$ obeying the following moment conditions:
\begin{equation}\label{eqn:condition}
\E[x_i]=0, \ \E[x_i^2]=1 \ \forall \ i \in \Z \text{ and} \ \sup _{i \in \Z} \E\big[\left|x_{i}\right|^{k}\big]=\alpha_{k}<\infty  \text { for } k \geq 3.
\end{equation}  
It was shown in \cite{liu2012fluctuations} that for $\phi$ as a monomial and under the normalization $1/\sqrt{n}$, the linear eigenvalue statistics of random Toeplitz matrices converge in distribution to a Gaussian random variable. Additionally, for $\phi (x)= x^p$, where $p$ is an \textbf{even} natural number, the linear eigenvalue statistics of random Hankel matrices too converge in distribution to a Gaussian random variable, under the normalization $1/\sqrt{n}$. In a recent article \cite{Hankel_1}, Kumar and Maurya showed that for odd $p$, the fluctuations are not sensitive to the normalization $1/\sqrt{n}$. The precise statements are as follow.


\begin{result}\label{res:Hankel_band} 
Suppose $H_{n}$ is a random
Hankel matrix with $w_p=\Tr(\frac{H_n}{\sqrt{n}})^p$. 
	\begin{enumerate}
		\item[(a)] \textbf{(Theorem 6.4, \cite{liu2012fluctuations})} 
If the entries of $H_n$ satisfies (\ref{eqn:condition}) and $\E [x_{i}^{4}] = \kappa  \ \ \forall \ i \in \Z$, then for even $p\geq 2$, as $n\rightarrow \infty$,
  $$\displaystyle \frac{1}{\sqrt{n}} \bigl\{ w_p - \E[w_p]\bigr\} \stackrel{d}{\rightarrow}  N(0,\sigma_p^2),$$
  	where $ N(0,\sigma_p^2)$ is a Gaussian random variable with an appropriate covariance structure $\sigma_p^2$.
  	\vskip3pt
		\item[(b)] \textbf{(Theorem 3, \cite{Hankel_1})} If the entries of $H_n$ satisfies (\ref{eqn:condition}), then for odd $p\geq 1$,  as $n\rightarrow \infty$,  
		$$\displaystyle \frac{1}{\sqrt{n}}  \bigl\{ w_p - \E[w_p]\bigr\} \stackrel{d}{\rightarrow}  0,$$
	\end{enumerate}
\end{result}

Our main result provides the limiting behaviour of linear eigenvalue statistics of Hankel matrices for odd degree monomial test functions.
\begin{theorem}\label{thm:moments}
	Let $H_n$ be a random Hankel matrix with an input sequence obeying (\ref{eqn:condition}) and $w_p=\Tr(\frac{H_n}{\sqrt{n}})^p$. Then for every odd $p \geq 3$ and $k \in \N$, the limit of $k$-th moment of $w_p$ are given by
	\begin{equation} \label{eqn:lim_moments_wp}
		\beta_k := \lim_{n \rightarrow \infty} \E[w_p^k]=
		\begin{cases}
			\displaystyle \sum_{ \pi \in \mathcal{P}_2(pk)} \frac{1}{2^{m(\pi)}}f_k(\pi) & \quad \mbox{if } k \text{ is even},\\
			0 & \quad \text{if } k \text { is odd},
		\end{cases}
	\end{equation}
	where $f_k(\pi)$ and $m(\pi)$ are as given in Definition \ref{defn: partition integrals} and (\ref{eqn:m pi}), respectively. 
	
	Furthermore for each odd $p\geq 3$, there exist probability measures $\Gamma_p$ on $\mathbb{R}$  with moment sequence $\{\beta_k \}$ and any such $\Gamma_{p}$ has a non-Gaussian distribution with unbounded support.
\end{theorem}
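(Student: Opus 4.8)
The plan is to extract all three assertions of the \emph{Furthermore} clause from the moment formula \eqref{eqn:lim_moments_wp} alone, with no further recourse to the matrices. For \emph{existence of $\Gamma_p$}, I would observe that for each fixed $n$ the sequence $(\E[w_p^k])_{k\ge0}$ is the moment sequence of the real random variable $\Tr(A_n^p)$, so the infinite Hankel matrix $\big(\E[w_p^{\,i+j}]\big)_{i,j\ge0}$ is positive semi-definite with $(0,0)$-entry $1$; both properties survive the entrywise limit $n\to\infty$, giving $(\beta_{i+j})_{i,j\ge0}\succeq0$ and $\beta_0=1$. Hamburger's solution of the moment problem then produces a probability measure $\Gamma_p$ on $\R$ with moments $\beta_k$. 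I would not claim uniqueness; since the other two properties depend only on $\{\beta_k\}$, they hold for every such $\Gamma_p$.

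For \emph{non-Gaussianity}: as $\beta_k=0$ for odd $k$, every such $\Gamma_p$ is symmetric, in particular centred, and a centred Gaussian with variance $\beta_2$ satisfies $\beta_4=3\beta_2^2$; so it suffices to establish the strict inequality $\beta_4>3\beta_2^2$. I would first check $\beta_2>0$ by exhibiting one leading-order (generic-maximal) pair partition of the $2p$ edges with positive partition integral, e.g.\ the matching that pairs the $r$-th edge of the first length-$p$ circuit with the $r$-th edge of the second for every $r$ (the Hankel coincidence relations then collapse the two circuits, leaving a full-dimensional set of free parameters). Then, in $\beta_4=\sum_{\pi\in\mathcal P_2(4p)}2^{-m(\pi)}f_4(\pi)$ only leading-order $\pi$ contribute ($f_4$ vanishes otherwise), and since $p$ is odd no block of $p$ edges can be paired entirely within itself; hence the cross-pairing graph on the four length-$p$ blocks is either connected or splits into two $2$-block components. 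The $(4-1)!!=3$ combinatorial types of the latter factorise — the partition integral factorises over components, $m(\pi)$ is additive, and the free-parameter count is additive — and contribute precisely $3\beta_2^2$. It then remains to produce a single $\pi_0\in\mathcal P_2(4p)$ whose cross-pairing graph on the four blocks is connected and which is leading-order, i.e.\ $f_4(\pi_0)>0$: for $p=3$ by an explicit finite check, and for general odd $p\ge3$ by splicing that example with a self-consistent chain of identifications along the remaining edges. Since each $f_k(\pi)$ is a volume of the solution set of a linear system inside a cube, hence $\ge0$, this yields $\beta_4\ge 3\beta_2^2+2^{-m(\pi_0)}f_4(\pi_0)>3\beta_2^2$.

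For \emph{unbounded support}: because $\beta_{2k}^{1/(2k)}=\|x\|_{L^{2k}(\Gamma_p)}\uparrow\sup\{|x|:x\in\supp\Gamma_p\}$, it is enough to show $\beta_{2k}^{1/(2k)}\to\infty$. Repeating the factorisation argument for general even $k$, the fully disconnected leading-order pair partitions of the $2pk$ edges — the $k$ blocks grouped into $k/2$ cross-paired pairs — contribute, over the $(k-1)!!$ groupings, exactly $(k-1)!!\,\beta_2^{k/2}$; discarding all other (nonnegative) terms gives $\beta_{2k}\ge (k-1)!!\,\beta_2^{k/2}$. With $\beta_2>0$ and $(k-1)!!\asymp (k/e)^{k/2}$ this forces $\beta_{2k}^{1/(2k)}\ge \beta_2^{1/4}\,((k-1)!!)^{1/(2k)}\to\infty$, so $\supp\Gamma_p$ is unbounded. (The same bound, supplemented at $k=4$ by the connected term above, again rules out a Gaussian, which would require equality for every $k$.)

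I expect the one genuinely delicate step to be producing the connected leading-order partition $\pi_0$ in the non-Gaussian part: one must verify that imposing all $2p$ Hankel coincidence relations \emph{across all four blocks simultaneously} still leaves the associated linear system of maximal rank, so that $f_4(\pi_0)>0$ — controlling how constraints originating in different blocks interfere is where the effort goes. By contrast, the existence argument is soft, and the factorisation bookkeeping for disconnected configurations, together with the finite evaluation of $\beta_2$ and $\beta_4$ from \eqref{eqn:lim_moments_wp}, is routine.
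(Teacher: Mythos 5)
Your proposal has a genuine gap: it does not prove the main assertion of the theorem at all. The statement is not just the ``Furthermore'' clause --- it asserts the convergence $\lim_{n\to\infty}\E[w_p^k]=\beta_k$ with the specific weights $2^{-m(\pi)}f_k(\pi)$, and you explicitly take that formula as given. That limit is where essentially all of the paper's work lies: starting from the trace formula, one must (i) reduce to pair partitions via cardinality bounds on the index sets $A_p$ (Lemma \ref{lem:card B_p}, Corollary \ref{cor:cardinality Uk}); (ii) handle the fact that for odd $p$ the Dirac delta in the trace formula depends on $i_r$ and forces the parity of $\sum_q(-1)^qj^r_q$ to be opposite to that of $n$ for \emph{every} $r$ simultaneously --- the paper converts this into counting all-odd/all-even edge labellings of the graph $G_\pi$ and proves via Lemma \ref{lemma:cyclomatic number} that there are $2^{\#E_\pi-\#V_\pi+c}$ of them, each cutting the index count by $2^{-\#E_\pi}$, which is precisely the origin of the factor $2^{-m(\pi)}$; and (iii) show the resulting sums, whose constraint set moves with $n$, converge to the fixed integrals $f_k(\pi)$ (Proposition \ref{pro: R_n limit}). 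None of this appears in your plan, so the heart of the theorem is unproved.

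Within the ``Furthermore'' part your route is legitimate in outline and partly parallels the paper: existence via positive semi-definiteness of the limiting Hankel kernel is exactly Section \ref{subsec:exi+uniq_Gammap}, and your bound $\beta_{2k}\ge$ (number of pairings of the $2k$ circuits into cross-matched pairs)$\times\beta_2^{k}$ is the paper's Proposition \ref{pro:unbdd_supp_Hn} transported to the limit formula (note your indexing slips: for $\beta_{2k}$ there are $2k$ blocks, $(2k-1)!!$ groupings, and the exponent on $\beta_2$ is $k$). But for non-Gaussianity you have only reduced the problem to exhibiting a \emph{connected} leading-order pair partition $\pi_0\in\mathcal P_2(4p)$ with $f_4(\pi_0)>0$ (plus $\beta_2>0$), and you leave exactly that step as ``an explicit finite check for $p=3$'' and ``splicing'' for general odd $p$. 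That is the genuinely delicate point --- you say so yourself --- and it is not carried out; the paper instead establishes the corresponding positivity concretely, by constructing the set $B'_{P_4}$ with prescribed coincidences among the $j^r_d$, restricting $(i_1,i_2,i_3,i_4)$ to ranges like $[7n/16,9n/16]$ so all indicator constraints hold, and counting degrees of freedom to get $\#B'_{P_4}\ge n^{2p}/8^3$, whence $T_3\ge 8^{-3}>0$ and $\beta_4>3\beta_2^2$. Until you either supply such an explicit verification (or an equivalent rank/volume argument valid for all odd $p\ge3$) and, above all, prove the limit formula itself, the proposal does not establish the theorem.
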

\begin{remark}
(i)	For $p=1$, 
	\begin{align*}
		w_1= \frac{1}{\sqrt{n}} \Tr(H_n) = \frac{1}{\sqrt{n}} \sum_{i=-(n-1) \atop i \ odd}^{(n-1)} x_{i}.
	\end{align*}
	It follows from \textit{central limit theorem} that $w_1$ converges in distribution to a Gaussian random variable.
	\vskip3pt
 \noindent (ii)  In Section \ref{subsec:exi+uniq_Gammap}, we show that $\{\beta_{k}\}$ does not obey Carleman's condition and therefore $\Gamma_{p}$ might not be unique. Regardless if we assume that $\Gamma_{p}$ is the unique distribution with moment sequence $\{\beta_k\}$, then by Theorem \ref{thm:moments} and moment method, $w_{p} \stackrel{d}{\rightarrow}  \Gamma_p$ for any choice of input sequence obeying (\ref{eqn:condition}). In spite of whether $\Gamma_{p}$ is unique or not, for each $p \geq 3$, $w_p$ does not converge in distribution to a Gaussian random variable. For more details, see Proposition \ref{pro:not_normal_Hn} and Corollary \ref{cor:non-Gaussian_limit}. 
 	\vskip3pt
  \noindent (iii) If $w_p$ converges in distribution, then the moment sequence of the limiting variable will be $\{\beta_k\}$ and the corresponding density will be symmetric which can also be seen in Figure \ref{fig:Limiting_dist}.
  	\vskip3pt
  \noindent (iv) We also show that for a real polynomial $Q(x)$ with  odd degree terms only,	 
 	\begin{equation} \label{thm:wQ_ind}
 		\E[\Tr\{Q(A_n)\}]^k \to \tilde{\beta}_k, \mbox{ as $n\rightarrow	\infty$,}
 	\end{equation}
 	where  $\tilde{\beta}_k$ is as given in (\ref{eqn:lim_moments_wQ}). If there exists a unique distribution, say $ \Gamma_Q$, then $\Tr\{Q(A_n)\} \stackrel{d}{\rightarrow} \Gamma_Q$.
 		\vskip3pt
 	\noindent (v)
 	Our results are in agreement with the simulations in Figure \ref{fig:Limiting_dist}. Simulations further suggest that the linear eigenvalue statistics converge to a universal non-Gaussian distribution which is \textit{unimodular} and \textit{absolutely continuous}.
\end{remark}



In \cite{liu2012fluctuations}, the fluctuation of linear eigenvalue statistics of Toeplitz matrix $T_n$ was studied using a trace formula of the following form
\begin{align*} 
	\Tr(T_n)^p =
		\displaystyle \sum_{i=1}^{n} \sum_{j_{1}, \ldots, j_{p}=-n}^{n} \prod_{r=1}^{p} a_{j_{r}} \prod_{\ell=1}^{p} \chi_{[1, n]} \left(i-\sum_{q=1}^{\ell}(-1)^{q} j_{q}\right) \delta_{0} (\sum_{q=1}^{p} j_{q}), 
\end{align*}
where $\delta$ is the Dirac function and $\chi$ is the indicator function.
 They also derived a closed form of trace formula for Hankel matrices and established the fluctuations of linear eigenvalue statistics of Hankel matrices when the test function is an even degree monomial. In both the situations, the Dirac function appearing in the trace formulas does not depend on ``$i$". 
 
   Now in this article, we study the fluctuations of linear eigenvalue statistics of Hankel matrices when the test function is an odd degree monomial. We use a closed form of trace formula (see Result \ref{res:trace_Hn}) for Hankel matrix to find the limiting moment sequence of  $\Tr(\frac{H_n}{\sqrt{n}})^p$. Note from Result \ref{res:trace_Hn} that for odd $p$, the Dirac function associated in the trace formula $\Tr(\frac{H_n}{\sqrt{n}})^p$ depends on ``$i$". The argument in \cite{liu2012fluctuations} will not work for the study of  linear eigenvalue statistics of Hankel matrices when the test function is an odd degree monomial. For this case, we built a nice connection between the trace formula and a specific type of signed graph to find out the limiting moment sequence. We use some combinatorial arguments to show that the limiting variable is non-Gaussian.

 \begin{figure}[h]
	\centering
	\includegraphics[height=100mm, width =150mm ]{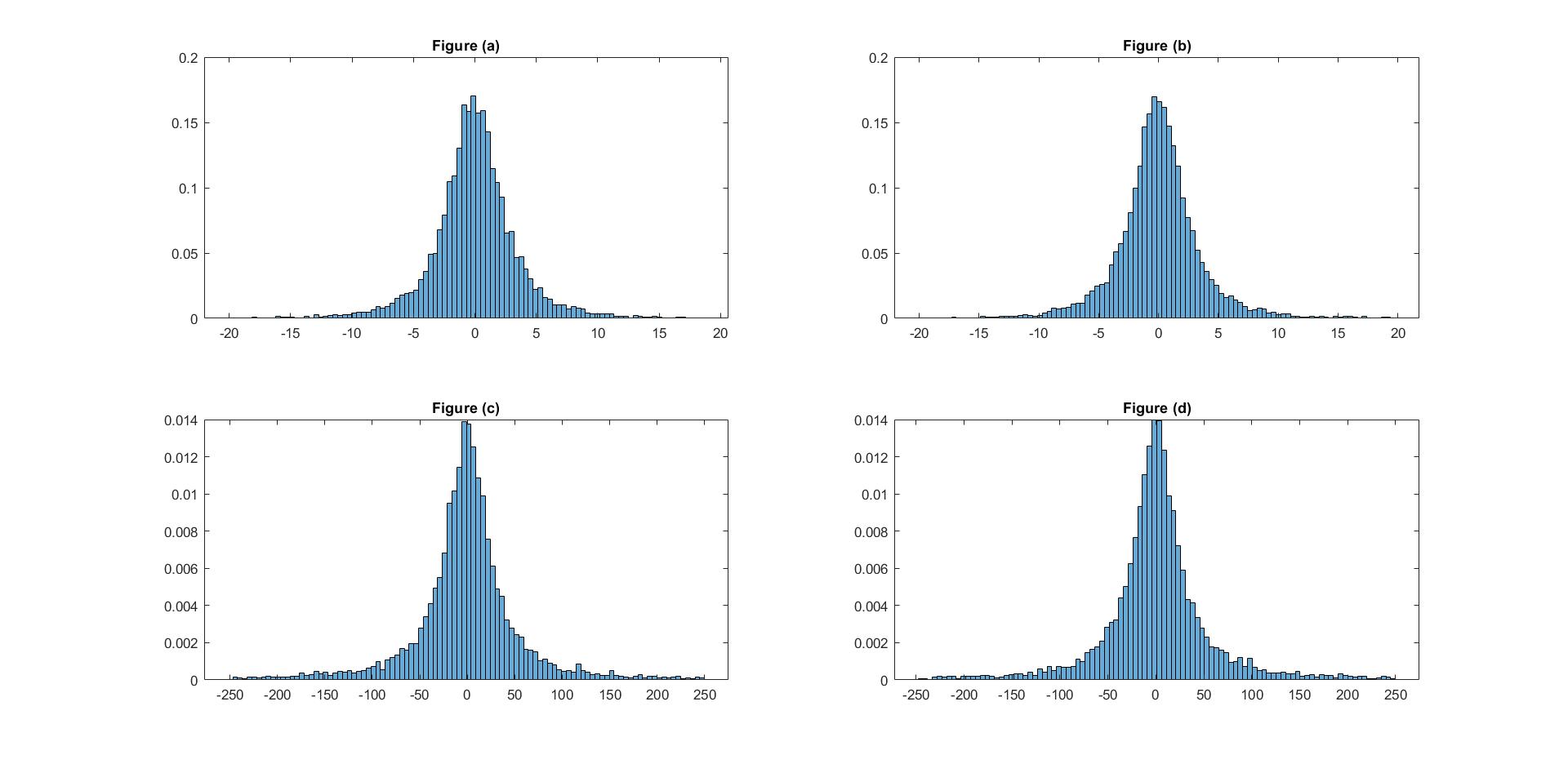}
	\caption{ (a) The histogram of $w_3$ with i.i.d. centred and normalized $U[0,1]$ as the input sequence. (b) The histogram of $w_3$ with i.i.d. standard Gaussian distribution as the input sequence. (c) The histogram of $w_7$ with i.i.d. centered and normalized $U[0,1]$ as the input sequence. (d) The histogram of $w_7$ with i.i.d. standard Gaussian distribution as the input sequence. In all cases, the size of the Hankel matrix is $200 \times 200$ and a total of 10000 random matrices are taken.}\label{fig:Limiting_dist}
\end{figure}

Now we briefly outline the rest of the manuscript. In Section \ref{sec:pre_Hankelodd} we introduce some combinatorial objects and the results associated with them, needed for the proofs of theorems. In Section \ref{sec:wp_ind} we prove Theorem \ref{thm:moments} and discuss the existence and uniqueness of the measure corresponding to the limiting moment sequence. In Section \ref{sec:prop_Gammap}, we study some properties of the limiting distribution of the linear eigenvalue statistics of Hankel matrices.
\section{Preliminaries} \label{sec:pre_Hankelodd}

We first introduce certain partitions and some integrals associated with them. Later, we introduce the concept of matching and the trace formula for Hankel matrices. 
Towards the end of the section, we introduce signed graphs and prove a result on particular types of labelling, named as all-odd labelling and all-even labelling.

\begin{definition}
   Consider the set $[n]=\{ 1,2 ,\ldots , n\}$. A partition $\pi$ of $[n]$ is called a \textit{pair-partition} if each block of $\pi$ has exactly two elements. If $i,j$ belong to the same block of $\pi$, we write $i \sim_{\pi} j$. The set of all pair-partitions of $[n]$ is denoted by $\PP_2(n)$. Clearly, $\PP_2(n)= \emptyset$ for odd $n$. 
\end{definition}

For a partition $\pi$ of $[n]$, we define a canonical ordering of its blocks by arranging the blocks in the increasing order of their smallest elements. Suppose the partition $\pi$ contains $k$ blocks $B_1, B_2,\ldots, B_k$ arranged in the increasing order, then the ordering gives a surjective function, also denoted by $\pi: [n ]\rightarrow [k]$, given by 
\begin{equation}\label{eqn: pi map}
\pi (i)=j, \  \ \mbox{if $i \in B_j$}.
\end{equation}

From a partition $\pi$ and a family of random variables $\{ y_i \}_{i \in [k]}$, we consider a family of random variables $\{ z_i \}_{i \in [n]}$ defined by $z_i=y_{\pi(i)}$. Using this, we introduce the following integrals:

\begin{definition}\label{defn: partition integrals}
	Let $p,k$ be natural numbers such that $kp$ is even and $\pi \in \PP_2(kp)$. Also for $y_1, y_2, \ldots , y_{\frac{pk}{2}}$ and $1 \leq r \leq k$, let $x_r = \frac{1}{2}\left(\sum_{q=1}^{p}(-1)^q y_{\pi((r-1) p+q)}+1 \right)$. Now for $1 \leq r \leq k$, we define
\begin{enumerate} [(i)]
	\item $U_{r}(p) =\prod_{l=1}^{p} \chi_{[0,1]}\left(x_{r}-\sum_{q=1}^{l}(-1)^{q} y_{\pi((r-1) p+q)}\right)$  and
	\item $ f_k(\pi) = \displaystyle  \int_{[-1,1]^{\frac{pk}{2}}} \prod_{r=1}^k U_{r}(p)  \, \mathrm{d}y_1 \mathrm{d}y_2\cdots \mathrm{d}y_{pk/2}$,
\end{enumerate} 
where $\chi_A$ denotes the indicator function of the set $A$.
\end{definition}
Even though, $f_k(\pi)$ depends on $p$, we are avoiding it to lighten the notations.


\begin{definition}\label{defn:cross-matched int}
Let $p_1,p_2$ be natural numbers such that $p_1+p_2$ is even and $\pi \in \PP_2(p+q)$. For $x_1= \frac{1}{2}\left(\sum_{q=1}^{p_1}(-1)^q y_{\pi (q)}+1 \right)$ and $x_2= \frac{1}{2}\left(\sum_{q=1}^{p_2}(-1)^q y_{\pi (p_1+q)}+1 \right)$, we define  
\begin{enumerate}[(i)]
\item $\tilde{U}_{p_1}=\prod_{\ell=1}^{p_1} \chi_{[0,1]}\left(x_{1}-\sum_{q=1}^{\ell}(-1)^{q} y_{\pi(q)}\right)$,
\item $\tilde{U}_{p_2}=\prod_{\ell=1}^{p_2} \chi_{[0,1]}\left(x_{2}-\sum_{q=1}^{\ell}(-1)^{q} y_{\pi(p_1+q)}\right)$ and
\item $g_{p_1,p_2}(\pi)=\displaystyle \int_{[-1,1]^{\frac{p_1+p_2}{2}}} \tilde{U}_{p_1} \tilde{U}_{p_2} \, \mathrm{d}y_1 \mathrm{d}y_2 \cdots \mathrm{d}y_{\frac{p_1+p_2}{2}}$.
\end{enumerate}
\end{definition}

For a vector $J= (j_1,j_2, \ldots, j_p) \in \Z^p$, we define the multi-set $S_J$ as 
\begin{equation}\label{def:S_J}
S_J=\{ j_1, j_2, \ldots , j_p \}.
\end{equation}
For a sequence of vectors $J_1, J_2, \ldots $, we shall use the notation $J_r=(j_1^r, j_2^r,\ldots , j_p^r)$ to denote the components of $J_r$ and $S_{J_r}$ to denote the multi-set associated with $J_r$.

The concept of matching is an important combinatorial notion connected to random matrices. Here, we define the following two notions of matching connected to Hankel matrices.
\begin{definition}
 \begin{enumerate}[(i)]
 \item For $J \in \mathbb{Z}^p$, $j \in \Z$ is said to be a \textit{self-matched element} if $j$ appears at least twice in $S_J$.
 \item  For $J_1 \in \mathbb{Z}^p$ and $J_2 \in \mathbb{Z}^q$, $j \in \Z$ is said to be a \textit{cross-matched element} if $j \in S_{J_1} \cap S_{J_2}$. Additionally, we say two vectors $J_1$ and $J_2$ are \textit{cross-matched} if $S_{J_1} \cap S_{J_2}$ is non-empty.
 \end{enumerate}
\end{definition}
 The following definition provides the natural extension of the concept of matching to any partition $\pi$ of $[p+q]$.
 \begin{definition} \label{def:par_self+crossmatch}
 	Let $\pi$ be a partition of $[p+q]$ for fixed $p,q \in \N$. Consider $J_1=(1,2,\ldots,p)$ and $J_2=(p+1,p+2,\ldots,p+q)$. Then,
 	\begin{enumerate}[(i)]
 		\item  we say an element $j \in J_i$ is self-matched if the intersection of the block of $\pi$ containing $j$, and $S_{J_i}$ has cardinality at least two. 
 		\item  we say an element $j, 1 \leq j \leq p+q$ is  cross-matched if the block of $\pi$ containing $j$, has a non-empty intersection with both $S_{J_1}$ and $S_{J_2}$.
 	\end{enumerate}
 We say a partition $\pi$ is cross-matched if at least one block of $\pi$ has non-empty intersection with both $S_{J_1}$ and $S_{J_2}$. Note that if $J_1$ and $J_2$ are as above and $(p+q)$ is odd, then every $\pi \in \PP_2(p+q)$ is cross-matched.	
 \end{definition}

A trace formula for the product of band Hankel matrices was stated in \cite{Hankel_1}. Now in the following result, we recall the trace formula for the product of Hankel matrices by choosing band width equal to the order of matrices. 

\begin{result}  [Result 4, \cite{Hankel_1}] \label{res:trace_Hn} 
Suppose  $H^{(r)}_{n}$ are Hankel matrices with input sequence $\{ x^{(r)}_i\}_{ i \in \mathbb{Z} }$ for $r=1,2, \ldots$, respectively. Then
\begin{align} \label{def:trace_Hn}
&\Tr(H^{(1)}_{n} H^{(2)}_{n} \cdots H^{(p)}_{n})  \nonumber \\
 & \quad = \begin{cases} 
    \displaystyle \sum_{i=1}^{n} \sum_{j_{1}, \ldots, j_{p}=-n}^{n} \prod_{r=1}^{p} x^{(r)}_{j_{r}} \prod_{\ell=1}^{p} \chi_{[1, n]} \left(i-\sum_{q=1}^{\ell}(-1)^{q} j_{q}\right) \delta_{0} (\sum_{q=1}^{p}(-1)^{q} j_{q}), &p \; \text{even}; \\
     \displaystyle  \sum_{i=1}^{n} \sum_{j_{1}, \ldots, j_{p}=-n}^{n} \prod_{r=1}^{p} x^{(r)}_{j_{r}} \prod_{\ell=1}^{p} \chi_{[1, n]}\left(i-\sum_{q=1}^{\ell}(-1)^{q} j_{q}\right) \delta_{2 i-1-n} (\sum_{q=1}^{p}(-1)^{q} j_{q}), & p \; \text{odd}, 
   \end{cases}
\end{align}
where $\delta_x$ is the Dirac delta function at $x$ and $\chi$ is the indicator function.
\end{result}

Our next definition is connected to the trace formula (\ref{def:trace_Hn}).  
For an odd number $p$ and $i$ such that $1 \leq i \leq n$, we define 
\begin{align} \label{eqn:Ap_HnOdd}
    A_{p,i} &=\Big\{(j_{1}, j_2, \ldots, j_{p}) \in\left\{0, \pm 1, \ldots, \pm n\right\}^{p}: \sum_{q=1}^{p} (-1)^{q} j_{q}=2i-1-n\Big\}, \nonumber \\
    A_p &= \bigcup_{i=1}^{n}A_{p,i}. 
\end{align}

\begin{definition}\label{def:B_p_1,p_2}
Let $p_1, p_2 \ldots , p_r$ be finitely many odd natural numbers. We define $B_{p_1, p_2, \ldots , p_r} \subseteq A_{p_1} \times A_{p_2} \times \cdots \times A_{p_r}$ as the set of all $(J_1, J_2, \ldots , J_r) \in A_{p_1} \times A_{p_2} \times \cdots \times A_{p_r} $, such that each element of the multi-set $\bigcup_{\ell =1}^r S_{J_\ell}$ has cardinality at least two, where $A_{p_i}$ is as defined in (\ref{eqn:Ap_HnOdd}). When $r$ is clear from context and $p_1=p_2=\cdots = p_r =p$, we denote $B_{p_1, p_2, \ldots , p_r}$ simply by $B_{p}$. 
\end{definition}
For real-valued functions $f$ and $g$, we say $f=O(g)$ if there exists a  constant $C>0$ and $x_0 \in \R$ such that $|f(x)| \leq Cg(x)$ for all $x \geq x_0$. The next lemma gives the order of cardinality of $B_{p_1 , p_2, \ldots ,p_r}$.
\begin{lemma}\label{lem:card B_p}
For odd natural numbers $p_1 , p_2, \ldots, p_r$, the cardinality of $B_{p_1, p_2, \ldots ,p_r}$ is given by $$\#B_{p_1, p_2, \ldots ,p_r}= O(n^{\lfloor\frac{p_1+p_2+\cdots + p_r}{2}\rfloor}),$$
where $\#\{\cdot\}$ denotes the cardinality of the set $\{\cdot\}$ and $\lfloor x \rfloor$ denotes the greatest integer less than or equal to $x$.
\end{lemma}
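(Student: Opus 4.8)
The plan is to bound $\#B_{p_1,\dots,p_r}$ by summing over the possible "matching patterns" — i.e. pair-partitions (or coarser partitions) of the index set $[p_1+\cdots+p_r]$ that record which $j$-entries coincide — and then, for each fixed pattern, to count the number of ways to choose the values of the free entries subject to the $r$ linear constraints coming from the definition of $A_{p_i}$ in \eqref{eqn:Ap_HnOdd}.

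First I would set $P = p_1 + p_2 + \cdots + p_r$ and observe that since every element of the multi-set $\bigcup_{\ell} S_{J_\ell}$ has multiplicity at least two, the tuple $(J_1,\dots,J_r)$ determines (and is subordinate to) some partition $\sigma$ of $[P]$ all of whose blocks have size $\geq 2$; the number of such partitions is a finite constant depending only on $P$, so it suffices to fix $\sigma$ and bound the number of tuples inducing exactly that matching. If $\sigma$ has $b$ blocks, then the distinct values among the $j$-entries are at most $b \leq \lfloor P/2 \rfloor$ many free integers, each ranging over $\{0,\pm 1,\dots,\pm n\}$, giving a crude bound of $O(n^{b}) = O(n^{\lfloor P/2\rfloor})$ choices for the entries. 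The subtlety is that this already gives the desired exponent only when $b = \lfloor P/2\rfloor$; when $P$ is odd (which is the generic case here, since each $p_i$ is odd, so $P \equiv r \pmod 2$) and $\sigma$ is a near-perfect matching with one block of size $3$, we have $b = (P-1)/2 = \lfloor P/2\rfloor$ and the bound is already tight — but we must also account for the outer sum over $i \in [n]$ in $A_p = \bigcup_i A_{p,i}$, which a priori contributes an extra factor of $n$. The point is that $i$ is \emph{not} free: once the $j$-entries of $J_1$ are chosen, the constraint $\sum_{q=1}^{p_1}(-1)^q j_q^{1} = 2i - 1 - n$ pins down $i$ uniquely (when it has a solution in range). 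So the index $i$ contributes no extra power of $n$.

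The remaining work is to show that the $r$ linear relations $\sum_{q=1}^{p_\ell}(-1)^q j_q^{\ell} = 2i_\ell - 1 - n$, $\ell = 1,\dots,r$, genuinely cut down the count. After fixing $\sigma$, eliminate $i_1,\dots,i_r$ as above; we are left with at most $b$ free integer variables and we want to show the effective count is $O(n^{\lfloor P/2\rfloor})$. When $b < \lfloor P/2 \rfloor$ this is immediate from the crude bound. When $b = \lceil P/2\rceil$ (only possible if $P$ is even, when a perfect matching has $b = P/2$), one of the $r$ constraints must be nontrivial on the free variables — one shows that a perfect matching compatible with the $A_{p_\ell}$ structure forces at least one linear dependency among the free variables (here is where the sign pattern $(-1)^q$ and the odd parity of each $p_\ell$ matter: a perfect self-matching within a single odd block is impossible, so some block straddles two of the $J_\ell$'s or the constraint is non-degenerate), dropping the count to $O(n^{P/2 - 1}) = O(n^{\lfloor P/2\rfloor})$ when $P$ is even too. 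I expect the bookkeeping for this dependency count to be the main obstacle: one has to verify carefully, across all the matching types, that either the number of distinct entries is at most $\lfloor P/2 \rfloor$, or else the linear constraints reduce the dimension by the required amount — essentially an inclusion–exclusion / rank argument on the constraint matrix, organized by the block structure of $\sigma$ and using that each $p_i$ is odd. Once that case analysis is in hand, summing the $O(n^{\lfloor P/2\rfloor})$ bound over the finitely many partitions $\sigma$ completes the proof.
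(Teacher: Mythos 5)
Your first two paragraphs are exactly the paper's argument: fix the coincidence pattern of the $j$-entries (finitely many patterns), observe that since every value has multiplicity at least two the number of distinct values is at most $\lfloor\frac{p_1+\cdots+p_r}{2}\rfloor$, each distinct value contributes $O(n)$ choices, and the indices $i_\ell$ are determined by the $j$-entries of $J_\ell$ (so the union over $i$ in $A_{p_\ell}$ costs no extra factor of $n$). That is a complete proof and coincides with the paper's, which organizes the same count sequentially via the quantities $T_\ell$.

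Your final paragraph, however, manufactures an obstacle that does not exist, and the claim you propose to prove there is false. Since every block of $\sigma$ has size at least two, the number $b$ of blocks always satisfies $b\leq\lfloor P/2\rfloor$; in particular, when $P$ is even and $\sigma$ is a perfect matching you have $b=P/2=\lfloor P/2\rfloor$, so the crude bound $O(n^{b})$ is already the stated estimate --- no rank or dependency analysis of the linear constraints is needed in any case. Moreover, the asserted further drop to $O(n^{P/2-1})$ for perfect matchings is not true: take $r=2$, $p_1=p_2=p$, and the matching $j^1_q=j^2_q$ for all $q$; then $(J_1,J_2)$ with $J_1=J_2\in A_p$ gives on the order of $n^{p}=n^{P/2}$ tuples, since a positive fraction of all $(j_1,\ldots,j_p)\in\{-n,\ldots,n\}^p$ have alternating sum of the right size and parity. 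So the constraints from $A_{p_\ell}$ only restrict, and the lemma's bound is exactly the trivial bound $O(n^{\lfloor P/2\rfloor})$; delete the last paragraph and your proof stands.
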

\begin{proof}
Consider a vector $(J_1, J_2, \ldots , J_r) \in B_{p_1, p_2, \ldots ,p_r}$ with $J_\ell = (j_1 ^\ell, j_2^\ell,\ldots,j_{p_\ell}^\ell)$ for each $1 \leq \ell \leq r$. Our objective here is to enumerate the number of possibilities for vectors $(J_1, J_2, \ldots , J_r) \in B_{p_1, p_2, \ldots ,p_r}$. 

Define a relation $\sim$ on $\{(\ell,s) : 1\leq \ell \leq r, 1\leq s \leq p_\ell \}$ by $(\ell_1,s_1) \sim (\ell_2,s_2)$ if $j_{s_1}^{\ell_1}=j_{s_2}^{\ell_2}$. Observe that the number of possible partitions $\pi$ of $\{(\ell,s) : 1\leq \ell \leq r, 1\leq s \leq p_\ell \}$ is finite. We prove that for each partition $\pi$, the number of choices of $(J_1, J_2, \ldots , J_r)\in B_{p_1, p_2, \ldots ,p_r}$ is at most $O(n^{\lfloor\frac{p_1+p_2+\cdots + p_r}{2}\rfloor})$. For that, fix a particular partition $\pi$ and define for each $1 \leq \ell \leq r$, $T_\ell$ as the cardinality of the set $\{ j_s^\ell \in S_{J_\ell}: j_s^\ell \not\in S_{J_u}, \, \forall \, u < \ell \}$, where $S_{J_\ell}$ and $S_{J_u}$ are as defined in Definition \ref{def:S_J}. We enumerate the vectors $(J_1, J_2, \ldots , J_r)$ by sequentially allotting values for components of each $J_\ell$.

Notice that the degree of freedom of choosing elements from $A_p$ equals to the degree of freedom of choosing $j_k$'s freely as once $j_k$'s are fixed, $i$ becomes fixed accordingly. Therefore, the number of choices for choosing components of $J_1$ is $O(n^{T_1})$. Continuing forward in the same fashion, it is clear that once $J_1, J_2, \ldots , J_{\ell -1}$ are chosen, the number of ways of choosing $J_\ell \in A_{p_\ell}$ is $O(n^{T_\ell})$. Thus the total number of possibilities is $O(n^{T_1 + T_2 + \cdots +T_r})$. 

Observe that $T_1 + T_2 + \cdots +T_r$ is the number of distinct values for components of the vectors $J_\ell , 1 \leq \ell \leq r$, and because each component is repeated at least twice, it follows that $T_1 + T_2 + \cdots +T_r$ is bounded above by $\lfloor\frac{p_1+p_2+\cdots + p_r}{2} \rfloor$. This completes our proof.
\end{proof}
Recall that in Definition \ref{def:B_p_1,p_2}, we had imposed the condition that each element of the multi-set $\bigcup_{\ell=1}^{r} S_{J_\ell}$ should be repeated at least twice. Note that, this condition plays a role only in obtaining an upper bound for $T_1 + T_2 + \cdots +T_r$. Therefore the cardinality of any subset of  $U \subseteq A_{p_1} \times A_{p_2} \times \cdots \times A_{p_r}$ is at most $O(n^{T})$, where $T$ is the maximum of $T_1 + T_2 + \cdots +T_r$ among elements of $U$.
We summarize this in the following corollary.
\begin{corollary}\label{cor:cardinality Uk}
Let $p_1 , p_2, \ldots, p_r$ be finitely many odd natural numbers and $T_1, T_2, \ldots, T_r$ as defined in the proof of Lemma \ref{lem:card B_p}. Let $U_k  \subseteq A_{p_1} \times A_{p_2} \times \cdots \times A_{p_r}$ be the collection of all elements $(J_1,J_2,\ldots , J_r)$ such that $T:= T_1 + T_2 +\cdots + T_r$ is less than or equal to $k$, where $A_{p_i}$ is as defined in (\ref{eqn:Ap_HnOdd}). Then the cardinality of $U_k$ is given by
$$\#U_k= O(n^k).$$
\end{corollary}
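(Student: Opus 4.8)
The plan is to prove Corollary~\ref{cor:cardinality Uk} by reusing the counting mechanism set up in the proof of Lemma~\ref{lem:card B_p}, but without invoking the repetition hypothesis that was special to $B_{p_1,\ldots,p_r}$. First I would fix an element $(J_1,J_2,\ldots,J_r)\in U_k$ and, exactly as in the lemma, introduce the partition $\pi$ on the index set $\{(\ell,s): 1\le \ell\le r,\ 1\le s\le p_\ell\}$ recording which components share a common value. Since $p_1,\ldots,p_r$ are fixed, there are only finitely many such partitions, so it suffices to bound, for each fixed $\pi$, the number of vectors realizing it. For a fixed $\pi$ the quantities $T_1,\ldots,T_r$ are determined (they count, for each $\ell$, how many ``new'' values the block structure forces $J_\ell$ to introduce relative to $J_1,\ldots,J_{\ell-1}$), and the sequential-allocation argument from the lemma gives $O(n^{T_1+\cdots+T_r})$ choices: once the new free coordinates of $J_\ell$ are chosen (with $T_\ell$ degrees of freedom, the constraint defining $A_{p_\ell}$ being absorbed into the choice of $i$ as noted there), the remaining coordinates of $J_\ell$ are forced by $\pi$.

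Next I would observe that membership in $U_k$ is precisely the condition $T=T_1+\cdots+T_r\le k$, so every partition $\pi$ that can occur for an element of $U_k$ satisfies $T_1+\cdots+T_r\le k$, and hence contributes at most $O(n^k)$ vectors. Summing over the finitely many admissible partitions $\pi$ keeps the bound at $O(n^k)$, which is the claim. I would phrase this as: by the argument in the proof of Lemma~\ref{lem:card B_p}, for each fixed partition $\pi$ the number of $(J_1,\ldots,J_r)$ inducing $\pi$ is $O(n^{T_1+\cdots+T_r})$; for elements of $U_k$ this exponent is at most $k$; since there are only finitely many partitions, $\#U_k=O(n^k)$.

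The only real subtlety — and it is minor — is to make explicit that the repetition hypothesis in Definition~\ref{def:B_p_1,p_2} was used in the lemma solely to bound $T_1+\cdots+T_r$ by $\lfloor(p_1+\cdots+p_r)/2\rfloor$, and that the rest of the degree-of-freedom count (in particular the $O(n^{T_\ell})$ bound per block, and the remark that fixing the $j_k$'s fixes $i$) goes through verbatim for an arbitrary subset of $A_{p_1}\times\cdots\times A_{p_r}$. This is exactly the content of the paragraph immediately preceding the corollary, so the proof can simply cite it. Thus the corollary is essentially a bookkeeping consequence of the lemma's proof, and I expect no genuine obstacle; the main thing to get right is the clean statement that $U_k$ is exactly the union of the ``partition classes'' with $T\le k$, so that the per-class bound $O(n^{T})\le O(n^k)$ combines correctly.

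\begin{proof}
Fix $(J_1,J_2,\ldots,J_r)\in U_k$ with $J_\ell=(j_1^\ell,\ldots,j_{p_\ell}^\ell)$, and let $\pi$ be the partition of $\{(\ell,s): 1\le \ell\le r,\ 1\le s\le p_\ell\}$ defined by $(\ell_1,s_1)\sim(\ell_2,s_2)$ iff $j_{s_1}^{\ell_1}=j_{s_2}^{\ell_2}$, as in the proof of Lemma~\ref{lem:card B_p}. Since $p_1,\ldots,p_r$ are fixed, there are only finitely many such partitions $\pi$. For a fixed $\pi$, the numbers $T_1,T_2,\ldots,T_r$ are determined, and by the sequential allocation argument in the proof of Lemma~\ref{lem:card B_p} (the constraint defining $A_{p_\ell}$ being absorbed into the choice of $i$, so that the components $j_k^\ell$ may be chosen freely), the number of $(J_1,\ldots,J_r)$ inducing $\pi$ is $O(n^{T_1+T_2+\cdots+T_r})$. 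For any $(J_1,\ldots,J_r)\in U_k$ we have $T_1+T_2+\cdots+T_r=T\le k$ by definition of $U_k$, so each admissible $\pi$ contributes $O(n^{k})$. Summing over the finitely many partitions $\pi$ yields $\#U_k=O(n^k)$.
\end{proof}
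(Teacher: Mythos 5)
Your proposal is correct and follows exactly the paper's reasoning: the paper proves this corollary via the remark immediately preceding it, namely that the repetition hypothesis in Definition \ref{def:B_p_1,p_2} was used only to bound $T_1+\cdots+T_r$, so the per-partition count $O(n^{T_1+\cdots+T_r})$ from Lemma \ref{lem:card B_p} applies to any subset and gives $O(n^k)$ on $U_k$. Your write-up just makes this bookkeeping explicit, which matches the intended argument.
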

Now  we introduce certain concepts from graph theory which will be used later in the proofs of theorems.
\begin{definition} \label{def:odd_even_labelling}
A \textit{signed graph} is a graph $G=(V,E)$ with a labelling of edges $f:E \rightarrow\{+1, -1\}$. A signed graph induces a canonical labelling $f_*: V \rightarrow \{+1, -1\}$ on vertices, defined by $f_*(v)=\prod_{u \in V: (u,v)\in E} f(u,v)$.

A labelling $f$ of edges such that the canonical labelling on all vertices are $-1$ is called an \textit{all-odd labelling} and a labelling such that the canonical labelling on all vertices are +1 is called an \textit{all-even labelling}. For a given graph $G$, we denote all-odd labellings and all-even labellings of $G$ by $G^{(ao)}$ and $G^{(ae)}$, respectively. 
\end{definition}
\begin{lemma}\label{lemma:cyclomatic number}
Let $G=(V,E)$ be a finite connected graph with no loops and such that $\#V$ is even. Then 
\begin{equation*}
 \#G^{(ao)}=  \# G^{(ae)} = 2^{\#E-\#V+1}.
\end{equation*}
\end{lemma}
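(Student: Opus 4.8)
The plan is to count edge-labellings $f : E \to \{\pm 1\}$ directly using linear algebra over $\mathbb{F}_2$. The key observation is that the constraint "$f_*(v) = -1$ for all $v$" (or "$f_*(v) = +1$ for all $v$") is a system of $\#V$ linear equations over $\mathbb{F}_2$ in $\#E$ unknowns. Writing $f(e) = (-1)^{x_e}$ with $x_e \in \mathbb{F}_2$, the canonical labelling at $v$ is $f_*(v) = (-1)^{\sum_{e \ni v} x_e}$, so the all-odd condition becomes $\sum_{e \ni v} x_e = 1$ for every $v$, and the all-even condition becomes $\sum_{e \ni v} x_e = 0$ for every $v$. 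In matrix form both are $M x = b$, where $M$ is the (unsigned) vertex-edge incidence matrix of $G$ over $\mathbb{F}_2$, with $b = \mathbf{1}$ in the all-odd case and $b = \mathbf{0}$ in the all-even case.

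Thus I would proceed in three steps. \textbf{First}, identify $\#G^{(ae)}$ with the size of the kernel of $M$ and $\#G^{(ao)}$ with the size of the solution set of $Mx = \mathbf{1}$, which is either empty or a coset of $\ker M$, hence has size $0$ or $|\ker M| = 2^{\#E - \operatorname{rank} M}$. \textbf{Second}, compute $\operatorname{rank} M$ over $\mathbb{F}_2$: it is a standard fact (which I would prove in a line) that for a connected graph on $\#V$ vertices the $\mathbb{F}_2$-incidence matrix has rank $\#V - 1$, because the left kernel is exactly spanned by the all-ones vector — every column (edge) has exactly two $1$'s, so the rows sum to zero, and connectedness shows no smaller nonempty set of rows sums to zero. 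Hence $|\ker M| = 2^{\#E - \#V + 1}$, giving the all-even count immediately. \textbf{Third}, show the all-odd system is consistent: since $\#V$ is even, $b = \mathbf{1}$ is orthogonal to the all-ones left-kernel vector (the sum of all its entries is $\#V \equiv 0 \bmod 2$), and the left kernel of $M$ is one-dimensional spanned by that vector, so $b$ lies in the column space of $M$. Therefore the all-odd solution set is a nonempty coset of $\ker M$ and also has size $2^{\#E - \#V + 1}$, completing the proof.

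The main obstacle — really the only substantive point — is verifying that the $\mathbb{F}_2$-incidence matrix of a connected graph has rank exactly $\#V - 1$, i.e. that its left kernel is precisely the span of $\mathbf{1}$. One direction is trivial (each edge meets two vertices, so summing all rows gives zero mod $2$). For the other direction I would argue: if a subset $S \subseteq V$ of vertices has its corresponding rows summing to zero, then every edge must be incident to an even number of vertices of $S$, i.e. either both endpoints in $S$ or both outside; by connectedness this forces $S = \emptyset$ or $S = V$. The hypothesis that $G$ has no loops is used here (a loop would contribute a zero column or break the "exactly two incidences" count depending on convention) and the evenness of $\#V$ is used only in the consistency step for the all-odd case. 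Everything else is bookkeeping with cosets and dimension counts. (The quantity $\#E - \#V + 1$ is of course the cyclomatic number / first Betti number of $G$, which is why the lemma is stated as it is.)
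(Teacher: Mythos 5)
Your proof is correct, but it takes a genuinely different route from the paper. You encode a labelling as a vector $x \in \mathbb{F}_2^{E}$ and observe that the all-even (resp.\ all-odd) condition is the linear system $Mx=\mathbf{0}$ (resp.\ $Mx=\mathbf{1}$) for the vertex--edge incidence matrix $M$ over $\mathbb{F}_2$; the count then follows from $\operatorname{rank} M = \#V-1$ for a connected loopless graph (left kernel spanned by $\mathbf{1}$) together with the consistency check $\mathbf{1}\cdot\mathbf{1}=\#V\equiv 0 \pmod 2$, which is exactly where the evenness of $\#V$ enters. The paper instead argues combinatorially by a double induction: first it shows by induction on $\#V$ that a tree with an even number of vertices has a unique all-odd labelling (peeling off leaves), and then it inducts on the cyclomatic number $\#E-\#V$, removing an edge $(v_1,v_2)$ lying on a cycle and showing that each all-odd labelling of the smaller graph extends to exactly two all-odd labellings of $G$, one of which is obtained by flipping the labels along a fixed path $P$ from $v_1$ to $v_2$. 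Your argument is shorter, treats $G^{(ao)}$ and $G^{(ae)}$ uniformly as a coset and a kernel of the same map, makes transparent both why the answer is $2^{\#E-\#V+1}$ (nullity of $M$) and why $\#V$ even is needed (solvability of the inhomogeneous system; without it $\#G^{(ao)}=0$), and applies verbatim to multigraphs. The paper's argument is more elementary and constructive --- it exhibits the labellings by explicit extension, which matches how they are manipulated in the proof of the main theorem --- at the cost of a longer case analysis and a uniqueness argument for the tree case. Both are complete proofs of the stated lemma.
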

\begin{proof}
We prove this lemma only for the all-odd labelling. A similar argument will  work for the proof of the all-even labelling.

First we prove the lemma for the special case when $G$ is a tree with $\# V$ as even. We use induction on $k$, where $2k= \# V$.
For a tree, we have $\# E = \# V -1$ and therefore our aim is to show that for all trees, there exists a unique all-odd labelling. For $k=1$, the proof is trivial, as there exists only one edge. Suppose the result holds for some $k \geq 1$. Let $G$ be a tree such that $\# V =2k+2$. A vertex $v \in V$, is called a \textit{leaf} if $\operatorname{deg}_G(v)=1$. For a finite tree $G$, one of the following cases always occur.
\begin{enumerate}[(i)]
\item There exists a vertex $v \in V$ and $v_1, v_2 \in V$ such that $(v,v_1), (v,v_2) \in E$ and $v_1, v_2$ are leaves,
\item there exist $v, v_1 \in V$ such that $(v,v_1) \in E, \operatorname{deg}_G(v)=2$ and $v_1$ is a leaf. 
\end{enumerate}
Suppose $G$ obeys (i). Consider the graph $G^{\prime}$ obtained from $G$ by removing $v_1,v_2$ and the edges connected to them. By the induction hypothesis, there exists a unique all-odd labelling $f^{\prime}$ on $G^{\prime}$. We construct a labelling on $G$ by defining $f(u,w)= f^{\prime}(u,w)$ for all $(u,w) \in E(G^{\prime})$, $f(v,v_1)=-1$ and $f(v,v_2)=-1$. This ensures that $f$ is an all-odd labelling. 

Now, suppose $G$ obeys (ii). Consider the graph $G^{\prime}$ obtained from $G$ by removing $v,v_1$ and the edges connected to them. Again by the induction hypothesis, there exists a unique all-odd labelling $f$ on $G^{\prime}$. We define $f(u,w)= f^{\prime}(u,w)$ for all $(u,w) \in E(G^{\prime})$, $f(v,v_1)=-1$ and $f(v,w)=+1$, where $(v,w) \in E(G)$. 

To prove that the all-odd labelling on $G$ is unique, notice that given an all-odd labelling $f^\prime$ on $G^\prime$ obtained in one of the above ways, there exists a unique labelling $f$ on $G$ such that $f\big|_{E(G^\prime)}= f^\prime$. Suppose $f_1$ and $f_2$ are distinct all-odd labellings on $G$. Then $f_1\big|_{E(G^\prime)}$ and $f_2\big|_{E(G^\prime)}$ are all-odd labellings on $G^\prime$. By the induction hypothesis, $f_1\big|_{E(G^\prime)}=f_2\big|_{E(G^\prime)}$, which implies that $f_1=f_2$. 

Now we prove the general case, using induction on $n= (\#E - \#V)$. The base case here is $n=-1$, below which the graph $G$ ceases to be connected. For $n=-1$, $G$ is always a tree and therefore, the result holds. 

Suppose the result holds for $n=k \geq -1$ and $\# V$ as even. Let $G$ be a graph such that $\#E - \#V= k+1$. Since $k+1 \geq 0$, there exists at least one cycle in $G$.
Let $(v_1, v_2)$ be an edge on the cycle.  Fix a path $P$ from $v_1$ to $v_2$, other than the edge $(v_1,v_2)$. Consider the connected graph $G^{\prime}$ obtained from $G$ by removing the edge $(v_1, v_2)$. Let $f^{\prime}$ be an all-odd labelling on $G^{\prime}$. For the rest of the proof, we fix this particular choice of path $P$. 

We define two labellings $f_1$ and $f_2$ on $G$ as
\begin{align*}
	f_1(u,v)=
	\begin{cases}
	f^{\prime}(u,v) &  \mbox{if } \	(u,v) \in E(G^{\prime}) \setminus P \\
		f^{\prime}(u,v) &  \mbox{if } \		(u,v) \in  P \\
		+1 &  \mbox{if } \	 u=v_1, v=v_2,
	\end{cases}
\end{align*}
and 
\begin{align*}
	f_2(u,v)=
	\begin{cases}
		f^{\prime}(u,v) &  \mbox{if } \		(u,v) \in E(G^{\prime}) \setminus P \\
		- f^{\prime}(u,v) &  \mbox{if } \		(u,v) \in  P \\
		-1 &  \mbox{if } \	 u=v_1, v=v_2.
	\end{cases}
\end{align*}
Then it follows that $f_1$ and $f_2$ are all-odd labellings on $G^\prime$. Furthermore, $f_1$ and $f_2$ are the only possible all-odd labellings on $G$ such that they agree with $f^\prime$ on $ E(G^{\prime}) \setminus P$. Thus we get
$$\# G^{(ao)} \geq 2 \times \#   G^{\prime (ao)}.$$ 
To prove the reverse inequality, we need to show that any all-odd labelling $f$ on $G$ can be obtained from an all-odd labelling $f^\prime$ on $G^{\prime}$ such that $f$ and $f^\prime$ are identical on $E(G^{\prime}) \setminus P$. 
	For $f$ such that $f(v_1,v_2)=+1$, the appropriate $f^\prime$ is \begin{align*}
		f^\prime(u,v)=
		\begin{cases}
			f(u,v) &  \mbox{if } \		(u,v) \in E(G^{\prime}) \setminus P \\
		      f(u,v) &  \mbox{if } \		(u,v) \in  P, \\
		\end{cases}
	\end{align*}
	 and for $f$ such that $f(v_1,v_2)=-1$, the appropriate $f^\prime$ is\begin{align*}
f^\prime(u,v)=
	\begin{cases}
		f(u,v) &  \mbox{if } \		(u,v) \in E(G^{\prime}) \setminus P \\
		-f(u,v) &  \mbox{if } \		(u,v) \in  P. \\
	\end{cases}
\end{align*}

Thus we get that,
\begin{align*}
\# G^{(ao)} &= 2 \times \#  G^{\prime (ao)}
= 2 \times 2^{k-1} = 2^k = 2^{\# E -\# V +1}.
\end{align*} 
This completes the proof of lemma.
\end{proof}
 \begin{figure}[h]
	\centering
	\includegraphics[height=80mm, width =150mm ]{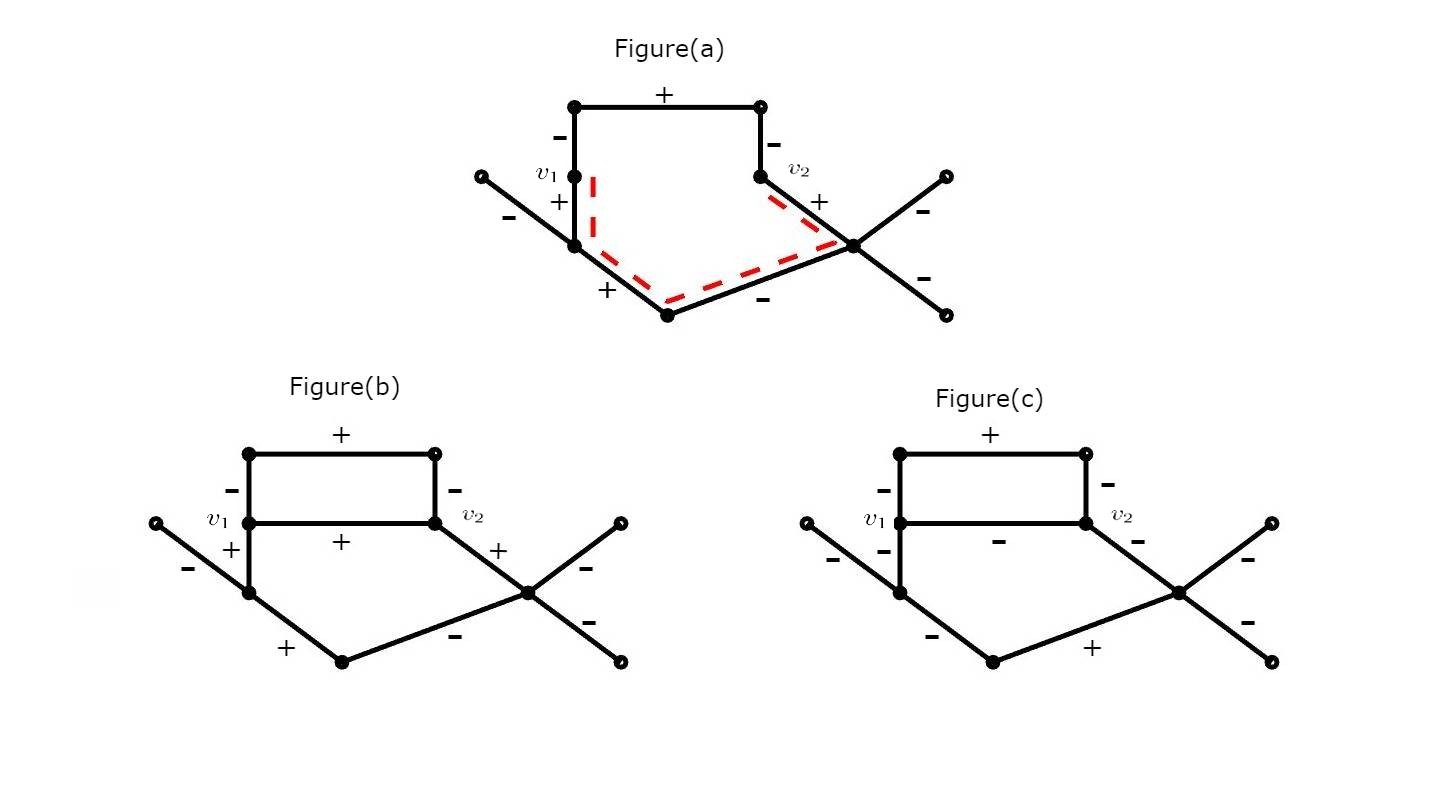}
	\caption{ An example of constructing all-odd labellings from an all-odd labelling of a subgraph. (a) is a sub-graph of the graphs (b) and (c), with the dotted line representing a path from $v_1$ to $v_2$. (b) and (c) are all-odd labellings obtained from an all-odd labelling on $(a)$ with $P$ as the path given by the dotted line.} \label{fig:all-odd labelling}
\end{figure}
The number $(\# E -\# V +1)$ is known as the \textit{cyclomatic number} or the \textit{first Betti number} of a connected graph. 

 Now we define a graph associated with pair-partitions. Let $\pi \in \mathcal{P}_2(pk)$ be a pair-partition of $[pk]$.
We  construct a graph $G_{\pi}=(V_{\pi},E_{\pi})$ associated with $\pi$ in the following fashion: We define $V_{\pi}=\{ 1, 2,\ldots , k\}$ and for $r \neq s$, $(r,s) \in E_{\pi}$ if $\pi$ contains a block $B$ such that $B=\{ (r-1)p+q_1, (s-1)p +q_2 \}$ for some $0 \leq q_1,q_2<p$. A maximal connected subgraph of $G_\pi$ is called a cluster.

For a pair-partition $\pi \in \mathcal{P}_2(pk)$ and integers $j_1, j_2, \ldots, j_{pk/2}$, the concept of  connectedness and cluster can be extended to vectors  $(J_1,J_2,\ldots , J_k)$ where $J_r = (j_1^r,j_2^r,\ldots , j_p^r)$ with $j_q^r= j_{\pi\left((r-1)p+q\right)}$ for all $r,q$ and $\pi$ is the surjective map defined in (\ref{eqn: pi map}).
 \begin{definition}\label{defn:cluster G pi}
	We say two vectors $J_r,J_s$ with $1\leq r,s \leq k$ are connected if the integers $r,s$ are connected in $G_{\pi}$ via  a path. We say vectors $\{J_{r_1},\ldots ,J_{r_k}\}$ form a cluster if $\{r_1,\ldots , r_k\}$ is the vertex set of a cluster in $G_\pi$.
\end{definition}
Note that the definition of connectedness and cluster only depend on the graph $G_{\pi}$ and is independent of the choice of $j_1,j_2,\ldots, j_{pk/2}$.

For a given $\pi \in \mathcal{P}_2(pk)$ and an associated graph $G_{\pi}$, we define
\begin{equation}\label{eqn:m pi}
m(\pi) =  \# V_{\pi}-c= k-c,
\end{equation}
where $c$ is the number of connected components of $G_{\pi}$.
\section{Limiting moment sequence} \label{sec:wp_ind} 

We first outline the proof of Theorem \ref{thm:moments} and discuss its departure from the technique of the proof of Theorem 6.4 of \cite{liu2012fluctuations}, and then we prove Theorem \ref{thm:moments}. In Section \ref{subsec:exi+uniq_Gammap}, we discuss the existence and uniqueness of measure corresponding to the limiting moment sequence, and in Section \ref{subsec:polytest_ind}, we study the fluctuations of linear eigenvalue statistics of Hankel matrices for polynomial test functions.

 Now we explain how the case of odd degree monomial test functions is significantly difficult than the case of even degree monomials dealt in \cite{liu2012fluctuations}. Recall $w_p$ from (\ref{eqn:wp_Hn_odd}). Using Result \ref{res:trace_Hn}, we get 
\begin{equation}\label{eqn:Ewp1}
	\E[w_p^k]= \frac{1}{n^{\frac{pk}{2}}}\displaystyle \sum_{J_1, \ldots, J_k} I_{J_1}I_{J_2}\cdots I_{J_k}\Delta_{J_1}\Delta_{J_2}\cdots\Delta_{J_k}\E[x_{J_1}x_{J_2}\cdots x_{J_k}],
\end{equation}
where for each $r=1,2, \ldots, k$,
\begin{equation*} 
	J_r=(j_1^r, j_2^r,\ldots , j_p^r), \ I_{J_r} = \prod_{\ell=1}^p \chi_{[1, n]}\left(i_r-\sum_{q=1}^{\ell}(-1)^q j^r_{q}\right), \ x_{J_r}= \prod_{\ell=1}^{p} x_{j^r_\ell},
\end{equation*}
with $j_q^r \in \{-n,-n+1,\ldots, 0, \ldots , n\}$ and 
\begin{align*}
	\Delta_{J_r}=	
	\begin{cases}
		\displaystyle \delta_{2 i_r-n-1}\left( \sum_{q=1}^{p}(-1)^{q} j_{q}^r\right) & \quad \mbox{if } p \text{ is odd},\\
		\displaystyle \delta_{0}\left( \sum_{q=1}^{p}(-1)^{q} j_{q}^r\right) & \quad \text{if } p \text { is even}.
	\end{cases}
\end{align*}

For each choice of $J=(J_1,J_2,\ldots , J_k)$, we define $j_{(r-1)p+s} :=j^r_s$ for  $r=1,2,\ldots, k$ and $1 \leq s \leq p$.  Now we define a relation $\sim_J$ on $[pk]$ such that $r_1 \sim_J r_2$ if $j_{r_1}=j_{r_2}$. Then (\ref{eqn:Ewp1}) can be written as
\begin{equation}\label{eqn:Ewp2}
	\E[w_p^k]=\frac{1}{n^{\frac{pk}{2}}}\displaystyle \sum_{\pi \in \mathcal{P}(pk)} \sum_{J_\pi}I_{J_1}I_{J_2}\cdots I_{J_k}\Delta_{J_1}\Delta_{J_2}\cdots\Delta_{J_k}\E[x_{J_1}x_{J_2}\cdots x_{J_k}],
\end{equation}
where $\mathcal{P}(pk)$ is the set of all partitions of $[pk]$ and $J_\pi$ is the set of all possible $J=(J_1,J_2,\ldots, J_k)$ such that the block structure of $(J_1,J_2,\ldots, J_k)$ under the relation $\sim_J$ is $\pi$.
In the proof of Theorem \ref{thm:moments}, we show that only pair-partitions of $[pk]$ contribute to $\E[w_p^k]$ in the limit and thus we argue further with fixed pair-partitions $\pi$.

For a fixed pair-partition $\pi$, $\{ j_q^r\}$ has $\frac{pk}{2}$ free choices and we represent each choice as a point in $\R^{pk/2}$ given by $W=(j_1,j_2,\ldots, j_{\frac{pk}{2}})$. Similarly  $\{i_r\}$ in (\ref{eqn:Ewp1}) has $k$ choices which we represent by the point $V=(i_1,i_2,\ldots , i_k)$. Consider the $(k+pk/2)$ dimensional space,  represented in Figure \ref{fig:odd-even} as a plane, with a general element as (V,W).

For a pair-partition $\pi$, consider the following two systems of equations:
\begin{align}
	\sum_{q=1}^{p}(-1)^{q} j_{\pi((r-1)p+q)}&=0, \  \forall \ 1 \leq r \leq k, \label{eqn:sys p even}\\
	\sum_{q=1}^{p}(-1)^{q} j_{\pi((r-1)p+q)}&=2 i_r-1-n, \ \forall \ 1 \leq r \leq k \label{eqn:sys p odd},
\end{align}
where $\pi((r-1)p+q)$ is the image of $(r-1)p+q$ under the surjective map $\pi$ given in (\ref{eqn: pi map}).

Note that for the summand in (\ref{eqn:Ewp2}) to be non-zero for some point $(V,W) \in \R^{k+pk/2}$, the components of it should obey the systems of equations (\ref{eqn:sys p even}) and (\ref{eqn:sys p odd}) for even and odd cases respectively.
For odd $p$ and even $k$, the solution space of the systems of equations  (\ref{eqn:sys p odd}) is a $pk/2$-dimensional spaces represented in Figure 2 as line segment MN.

Two differences come up between the $p$ odd and $p$ even cases. The first difference is that for even case, the solution space of (\ref{eqn:sys p even}) is independent of $n$, whereas for the odd case, the solution space MN is changing with respect to $n$. The second and less obvious difference is the following: Suppose for even values of $p$ and $k$, $(j_1,j_2,\ldots, j_{pk/2})$ obeys the system of equations (\ref{eqn:sys p even}) for some choice of $V=(i_1,\ldots,i_k)$ and $n \in \N$. Then $(j_1,j_2,\ldots , j_{pk/2})$ is a solution of the system of equations (\ref{eqn:sys p even}) for every $m \geq n$.
This is not true for the odd $p$ case and in fact, a $(j_1, j_2, \ldots , j_{pk/2})$ cannot be a solution for both odd and even $n$ simultaneously. 
%

The challenge of our work is to control, these additional dependencies and show that $\E[w_p^k]$ still converges to a definite integral, just like in the even case. The first issue is taken care of by Proposition \ref{pro: R_n limit} and the parity dependence is taken care of by Lemma \ref{lemma:cyclomatic number}.
\begin{figure}[h]
	\centering
	\includegraphics[height=55mm, width =55mm ]{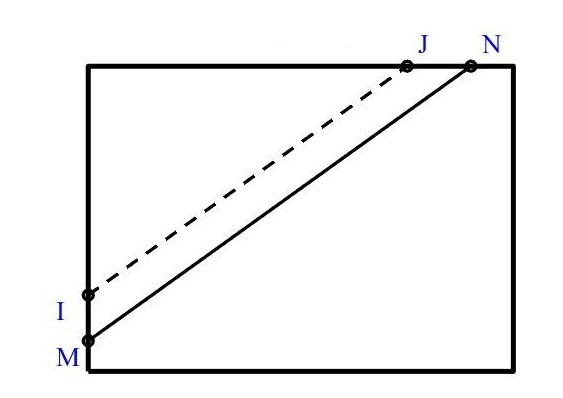}
	\caption{A pictorial representation of the $(k+pk/2)$-dimensional space corresponding to the solution space of the system of equations (\ref{eqn:sys p odd}). The line segment IJ corresponds to the solution space of $2i_r-1=\sum_{q=1}^{p} (-1)^{q} y_{\pi((r-1)p+q)}$ and the line segment MN corresponds to the solution space of $2i_r-1-\frac{1}{n}=\sum_{q=1}^{p} (-1)^{q} y_{\pi((r-1)p+q)}$.} \label{fig:odd-even}
\end{figure}

Notice in Figure \ref{fig:odd-even} that the distance between MN and IJ under the sup norm metric is $1/2n$ and therefore, the distance between them goes to zero as $n \rightarrow \infty$. Recall the definite integral $f_k(\pi)$ in Definition \ref{defn: partition integrals} . Considering $f_k(\pi)$ as a function on $\R^{pk/2+k}$, with the first $k$ components representing $\{x_r\}$ and the rest $pk/2$ components representing $\{y_i\}$, we obtain that $f_k(\pi)$ is an integral over IJ. The next proposition shows that $f_k(\pi)$ can be approximated by appropriate summations on the space MN.
\begin{proposition}\label{pro: R_n limit}
Let $p$ be a fixed odd natural number and $k$ be an even natural number.  For $n \in \N$ and $\pi \in \mathcal{P}_2(pk)$, if we define
\begin{equation}
\mathcal{R}_n := \frac{1}{n^{\frac{pk}{2}}}\displaystyle \mathlarger{\sum}_{j_1,j_2,\ldots, j_{pk/2}=-n}^n \prod_{r=1}^{k}\prod_{\ell=1}^p \chi_{[1, n]}\left(i_r-\sum_{q=1}^{\ell}(-1)^q j_{\pi\left((r-1)p+q\right)}\right),
\end{equation}
then as $n \rightarrow \infty$, $\mathcal{R}_n$ converges to $f_k(\pi)$, where  $i_r =\frac{1}{2}\left( \sum_{q=1}^{p}(-1)^q j_{\pi\left((r-1)p+q\right)}+n+1 \right)$ and $f_k(\pi)$ is as in Definition \ref{defn: partition integrals}.
\end{proposition}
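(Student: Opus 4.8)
The plan is to recognize $\mathcal{R}_n$ as a Riemann sum for the integral $f_k(\pi)$ and carry out the limit by a change of variables. First I would rescale: substitute $y_i = j_i/n$ for $i = 1, \ldots, pk/2$, so that each $y_i$ ranges over the lattice $\{-1, -1+\tfrac{1}{n}, \ldots, 1\}$ inside $[-1,1]$, and the prefactor $n^{-pk/2}$ becomes exactly the mesh volume $(1/n)^{pk/2}$ of this lattice. Under this substitution, $\tfrac{1}{n}\bigl(i_r - \sum_{q=1}^{\ell}(-1)^q j_{\pi((r-1)p+q)}\bigr)$ becomes $\tfrac{1}{n}i_r - \sum_{q=1}^{\ell}(-1)^q y_{\pi((r-1)p+q)}$, and since $i_r = \tfrac{1}{2}\bigl(\sum_{q=1}^{p}(-1)^q j_{\pi((r-1)p+q)} + n + 1\bigr)$ we have $\tfrac{1}{n}i_r = \tfrac{1}{2}\bigl(\sum_{q=1}^{p}(-1)^q y_{\pi((r-1)p+q)} + 1 + \tfrac{1}{n}\bigr)$, which differs from $x_r$ of Definition \ref{defn: partition integrals} by exactly $\tfrac{1}{2n}$. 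The constraint $\chi_{[1,n]}(\cdot)$ on the integer argument translates, after dividing by $n$, to $\chi_{[1/n,\,1]}(\cdot)$, which differs from $\chi_{[0,1]}(\cdot)$ only near the endpoint $0$. Thus $\mathcal{R}_n$ is a Riemann sum over the lattice for an integrand that converges pointwise (away from a measure-zero set) to $\prod_{r=1}^{k} U_r(p)$.

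Next I would make the approximation rigorous. The integrand is a product of indicator functions, hence bounded by $1$ and supported in $[-1,1]^{pk/2}$, so dominated convergence is available once pointwise a.e.\ convergence of the Riemann sums is established. The two sources of discrepancy — the $\tfrac{1}{2n}$ shift inside each $x_r$ and the $\chi_{[1/n,1]}$ versus $\chi_{[0,1]}$ mismatch — each perturb the region of integration only within an $O(1/n)$-neighborhood of a finite union of hyperplanes (the boundaries $\{x_r - \sum_{q=1}^{\ell}(-1)^q y_{\pi((r-1)p+q)} \in \{0,1\}\}$), whose Lebesgue measure tends to $0$. Combining this with the standard fact that Riemann sums of a bounded Riemann-integrable function (here, a finite Boolean combination of half-spaces intersected with a box, which is Jordan-measurable) over a refining lattice converge to the integral gives $\mathcal{R}_n \to \int_{[-1,1]^{pk/2}} \prod_{r=1}^{k} U_r(p)\, \mathrm{d}y = f_k(\pi)$.

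The main obstacle I anticipate is handling the boundary terms carefully: because the defining inequalities of $U_r(p)$ are not strict and the discretized version uses $\chi_{[1,n]}$ (a closed interval on the integer scale) while the target uses $\chi_{[0,1]}$, one must check that the set of $y \in [-1,1]^{pk/2}$ lying on one of the bounding hyperplanes has measure zero, and that the lattice points falling exactly on or just outside these hyperplanes contribute negligibly. A clean way to do this is to sandwich $\mathcal{R}_n$ between Riemann sums for slightly shrunken and slightly enlarged open/closed versions of the region defined by strict inequalities $x_r - \sum(-1)^q y < 1$ and $> 0$, show both bounding integrals equal $f_k(\pi)$ (since the boundary has measure zero), and let $n \to \infty$. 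I would also note that the shift by $n+1$ versus $n$ in the definition of $i_r$ (an $O(1/n)$ effect after rescaling) is absorbed into the same boundary-neighborhood estimate. No other step requires more than routine bookkeeping.
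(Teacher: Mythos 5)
Your proposal is correct and follows essentially the same route as the paper: both view $\mathcal{R}_n$ as a Riemann sum for $f_k(\pi)$ after rescaling $y_i=j_i/n$, with the only discrepancies being the $\tfrac{1}{2n}$ shift in $i_r/n$ and the $\chi_{[1/n,1]}$ versus $\chi_{[0,1]}$ endpoint mismatch, and both show these are confined to $O(1/n)$-thin neighbourhoods of the bounding hyperplanes and hence negligible. The paper quantifies this by counting lattice points (at most two admissible values of $j_1$ once the other $j$'s are fixed, giving $O(n^{pk/2-1})$ offending terms) while you appeal to Jordan measurability and a sandwich argument, which is only a cosmetic difference.
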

\begin{proof}
For $n \in \N$ and $\pi \in \mathcal{P}_2(pk)$, we define 
$$\mathcal{T}_n= \frac{1}{n^{\frac{pk}{2}}}\displaystyle \sum_{j_1,j_2,\ldots, j_{pk/2}=-n}^n \prod_{r=1}^k\prod_{\ell=1}^p \chi_{[0, 1]}\left(\frac{\tilde{i}_r}{n}-\sum_{q=1}^{\ell} (-1)^q\frac{j_{\pi\left((r-1)p+q\right)}}{n}\right),$$
where $\tilde{i}_r =\frac{1}{2}\left( \sum_{q=1}^{p}(-1)^q j_{\pi\left((r-1)p+q\right)}+ n \right)$. Note that $\mathcal{T}_n$ is a Riemann sum  of the integral $f_k(\pi)$.

Now
\begin{align*}
&|\mathcal{R}_n - \mathcal{T}_n| \\ 
& = \bigg|  \frac{1}{n^{\frac{pk}{2}}} \hskip-8pt \displaystyle \mathlarger{\sum}_{j_1,j_2,\ldots, \atop j_{pk/2}=-n}^n \hskip-8pt \left(\prod_{r=1}^{k}\prod_{\ell=1}^p \chi_{[\frac{1}{n}, 1]}\left(\frac{i_r}{n}-\sum_{q=1}^{\ell} (-1)^q\frac{j_{\pi\left((r-1)p+q\right)}}{n}\right)-\prod_{r=1}^{k}\prod_{\ell=1}^p \chi_{[0, 1]}\left(\frac{\tilde{i}_r}{n}-\sum_{q=1}^{\ell} (-1)^q\frac{j_{\pi\left((r-1)p+q\right)}}{n}\right)\right)\bigg|\\
& \leq\frac{1}{n^{\frac{pk}{2}}}\displaystyle \hskip-8pt \mathlarger{\mathlarger{\sum}}_{j_1,j_2, \ldots, \atop j_{pk/2}=-n}^n \hskip-8pt \mathlarger{\mathlarger{\sum}}_{r=1}^{k}\mathlarger{\mathlarger{\sum}}_{\ell=1}^p\left|\left(\chi_{[\frac{1}{n}, 1]}\left(\frac{i_r}{n}-\sum_{q=1}^{\ell} (-1)^q\frac{j_{\pi\left((r-1)p+q\right)}}{n}\right)-\chi_{[0, 1]}\left(\frac{\tilde{i}_r}{n}-\sum_{q=1}^{\ell} (-1)^q\frac{j_{\pi\left((r-1)p+q\right)}}{n}\right)\right)\right|.
\end{align*}
Since $\frac{i_r}{n}=\frac{\tilde{i}_r}{n}+\frac{1}{2n}$, we have
\begin{equation*}
\chi_{[\frac{1}{n}, 1]}\left(\frac{i_r}{n}-\sum_{q=1}^{\ell} (-1)^q\frac{j_{\pi\left((r-1)p+q\right)}}{n}\right) = \chi_{\left[ \frac{1}{2n}, 1-\frac{1}{2n}\right]}\left(\frac{\tilde{i}_r}{n}-\sum_{q=1}^{\ell} (-1)^q\frac{j_{\pi\left((r-1)p+q\right)}}{n}\right).
\end{equation*} 
Thus for fixed $r$ and $\ell$, 
\begin{align}\label{eqn: xhi_sum}
& \left|\chi_{[\frac{1}{n}, 1]}  \left(\frac{i_r}{n}-\sum_{q=1}^{\ell} (-1)^q\frac{j_{\pi\left((r-1)p+q\right)}}{n}\right)-\chi_{[0, 1]}\left(\frac{\tilde{i}_r}{n}-\sum_{q=1}^{\ell} (-1)^q\frac{j_{\pi\left((r-1)p+q\right)}}{n}\right) \right|\nonumber\\
& = \left(\chi_{\left[0, \frac{1}{2n} \right]}\left(\frac{\tilde{i}_r}{n}-\sum_{q=1}^{\ell} (-1)^q\frac{j_{\pi\left((r-1)p+q\right)}}{n}\right)+\chi_{[1-\frac{1}{2n}, 1]}\left(\frac{\tilde{i}_r}{n}-\sum_{q=1}^{\ell} (-1)^q\frac{j_{\pi\left((r-1)p+q\right)}}{n}\right)\right).
\end{align}
We show that for each choice of $r,\ell$, the number of possibilities of $\{j_1,j_2,\ldots, j_{pk/2}\}$ such that the right side of (\ref{eqn: xhi_sum}) is non-zero is of the order $O(n^{pk/2-1})$. Observe that
\begin{align}\label{eqn:simplified_ir}
	\frac{\tilde{i}_r}{n}-\sum_{q=1}^{\ell} (-1)^q\frac{j_{\pi\left((r-1)p+q\right)}}{n} &= \frac{1}{2}\left(\sum_{q=1}^{p} (-1)^q\frac{j_{\pi\left((r-1)p+q\right)}}{n}+1\right)-\sum_{q=1}^{\ell} (-1)^q\frac{j_{\pi\left((r-1)p+q\right)}}{n}\nonumber\\
	&= \frac{1}{2}\left(1+\sum_{q=\ell+1}^{p} (-1)^q\frac{j_{\pi\left((r-1)p+q\right)}}{n}-\sum_{q=1}^{\ell} (-1)^q\frac{j_{\pi\left((r-1)p+q\right)}}{n}\right).
\end{align}
 
 Suppose $j_2,j_3,\ldots, j_{pk/2}$ are chosen. Since the length of the interval $[0,1/2n]$ is $1/2n$, we get that there exist at most two values of $j_1=j_{\pi(1)}$ such that the expression of (\ref{eqn:simplified_ir}) belongs to the interval $[0,1/2n]$. The same reasoning also implies that there exist at most only two values of $j_1$ such that the expression of (\ref{eqn:simplified_ir}) belongs to the interval $[1-\frac{1}{2n},1]$.
Therefore there is a reduction in the degree of freedom for choosing $j_k$'s. As a consequence,
\begin{equation*}
\frac{1}{n^{pk/2}}\hskip-3pt\sum_{j_1,j_2, \ldots, \atop j_{pk/2}=-n}^n\hskip-5pt \left|\chi_{[\frac{1}{n}, 1]} \left(\frac{i_r}{n}-\sum_{q=1}^{\ell} (-1)^q\frac{j_{\pi\left((r-1)p+q\right)}}{n}\right)-\chi_{[0, 1]}\left(\frac{\tilde{i}_r}{n}-\sum_{q=1}^{\ell} (-1)^q\frac{j_{\pi\left((r-1)p+q\right)}}{n}\right)\right|= O\left(\frac{1}{n}\right).
\end{equation*}
Thus, $|\mathcal{R}_n - \mathcal{T}_n|$ converges to zero. Since $\mathcal{T}_n$ is the Riemann sum of the integral $f_k(\pi)$, this proves our result.
\end{proof}
Now we prove Theorem \ref{thm:moments}.
\begin{proof} [Proof of Theorem \ref{thm:moments}]
First note from (\ref{eqn:wp_Hn_odd}) that $(w_p)^k= [\Tr(A_n^p)]^k.$
  Now using Result \ref{res:trace_Hn}, we get
\begin{equation}\label{eqn:E(w_p^k)}
\E[w_p^k]= \E\left[ \frac{1}{n^{p/2}}\sum_{J \in A_p} x_J I_J \right]^k =  \frac{1}{n^{\frac{pk}{2}}}\displaystyle \sum_{J_1\in A_p, \ldots, J_k \in A_p} I_{J_1}I_{J_2}\cdots I_{J_k}\E[x_{J_1}x_{J_2}\cdots x_{J_k}],
\end{equation}
 where $A_p$ is as in (\ref{eqn:Ap_HnOdd}) and for each $r=1,2, \ldots, k$,
 \begin{equation} \label{eqn:J_I_X}
J_r=(j_1^r, j_2^r,\ldots , j_p^r), \ I_{J_r} = \prod_{\ell=1}^p \chi_{[1, n]}\left(i_r-\sum_{q=1}^{\ell}(-1)^q j^r_{q}\right), \ x_{J_r}= \prod_{\ell=1}^{p} x_{j^r_\ell}.
 \end{equation}

 We claim that only pair-partitions of $[pk]$ contribute to (\ref{eqn:E(w_p^k)}). Consider $S_{J_r}=\{ j^r_1, j^r_2,\ldots , j^r_p \}$ for $1 \leq r \leq k$. For $(J_1,J_2,\ldots, J_k)$ to have non-zero contribution in (\ref{eqn:E(w_p^k)}), it is necessary that each element of $S_{J_1}\cup S_{J_2}\cup \cdots \cup S_{J_k}$ has multiplicity at least two, as $\E[x_j]=0$ for each $j$.

It follows from Corollary \ref{cor:cardinality Uk} that if any element has multiplicity greater than or equal to three, then the number of choices for choosing $(J_1,J_2,\ldots ,J_k)$ would be $O(n^{\lfloor\frac{pk-1}{2}\rfloor})$. Thus the contribution of all such terms to (\ref{eqn:E(w_p^k)}) would be $O\left(\frac{1}{n^{\frac{pk}{2}}} \times n^{\lfloor\frac{pk-1}{2}\rfloor}\right)= O(n^{-\frac{1}{2}})$. This shows that when $k$ is odd,  $\E[w_p^k]= o(1)$.

Now suppose $k$ is even. So, from the above discussion it is clear that for a contribution of the order $O(1)$ in $\E[w_p^k]$, we need to consider only pair-partitions. Since we have $\E[x_i^2]=1$ for all $i$, (\ref{eqn:E(w_p^k)}) can be rewritten as 
\begin{align}\label{eqn:E(wp^k pair-partition)}
\E[w_p^k] &= \displaystyle  \frac{1}{n^{pk/2}} \sum_{J_1,J_2,\ldots , J_{k}} \sum_{i_1,\ldots ,i_k=1}^n I_{J_1}I_{J_2}\cdots I_{J_k} \Delta_{J_1} \Delta_{J_2} \cdots \Delta_{J_k}\nonumber\\
&=\displaystyle  \frac{1}{n^{pk/2}}\sum_{\pi \in \PP_2(kp)} \sum_{j_1,j_2,\ldots , j_{pk/2}=-n \atop j_1\neq j_2\neq \cdots \neq j_{pk/2}}^n \sum_{i_1,\ldots ,i_k=1}^n I_{J_1}I_{J_2}\cdots I_{J_k} \Delta_{J_1} \Delta_{J_2} \cdots \Delta_{J_k},
\end{align}
where $I_{J_r}$ is as in (\ref{eqn:J_I_X}) and $\Delta_{J_r}=\delta_{2 i_r-n-1}\left( \sum_{q=1}^{p}(-1)^{q} j_{q}^r\right)$ with $j_q^r= j_{\pi \left((r-1)p+q\right)}$. Henceforth in this proof, we shall use $j_q^r$ to denote $j_{\pi \left((r-1)p+q\right)}$.
For a fixed $\pi \in \mathcal{P}_2(pk)$, consider the summation in (\ref{eqn:E(wp^k pair-partition)}) corresponding to $\pi$, that is,
\begin{equation}\label{eqn:E(w_p^k) of pi}
\sum_{j_1,j_2,\ldots , j_{pk/2}=-n \atop j_1\neq j_2\neq \cdots \neq j_{pk/2}}^n \sum_{i_1,\ldots ,i_k=1}^n I_{J_1}I_{J_2}\cdots I_{J_k} \Delta_{J_1} \Delta_{J_2} \cdots \Delta_{J_k}.
\end{equation}

Note that for a fixed $\pi$ and $j_1,j_2,\ldots, j_{pk/2}$, the term $\Delta_{J_r}$ is non-zero for some choice of $i_r$, if and only if both the following conditions are satisfied: 
\begin{enumerate}[(i)]
\item $-(n-1) \leq \sum_{q=1}^{p}(-1)^{q} j_q^r \leq n-1$,
\item $ \sum_{q=1}^{p}(-1)^{q}  j_q^r$ and $n$ have different parity.
\end{enumerate}
Therefore (\ref{eqn:E(wp^k pair-partition)}) can be written as 
\begin{equation*}
	\E[w_p^k]= \frac{1}{n^{pk/2}}\displaystyle \sum_{\pi \in \PP_2(kp)}\sum_{j_1,\ldots , j_{pk/2}=-n \atop j_1\neq \cdots \neq j_{pk/2}}^n \prod_{r=1}^k \left(I_{J_r} \ \chi_{[-(n-1),n-1]}\big\{\sum_{q=1}^p (-1)^q j_q^r\big\} \ \delta_{-1}\left( (-1)^{\sum_{q=1}^{p}(-1)^q j_q^r+n}\right) \right),
\end{equation*}
where $I_{J_r}$ is as in (\ref{eqn:J_I_X}) with $i_r$ being a function of $j_k$'s defined by $i_r=\frac{1}{2}\left( \sum_{q=1}^{p}(-1)^q j_q^r+n+1 \right)$.
Observe that $ \chi_{\left[1, n\right]}\left(i_r-\sum_{q=1}^{p}(-1)^q j_q^r\right)$ is non-zero for some $i_r$ only if (i) is satisfied. Thus, the above equation becomes
\begin{align}\label{eqn: Ewp^k xhi}
	\E[w_p^k]&=\frac{1}{n^{pk/2}}\displaystyle \sum_{\pi \in \PP_2(kp)}\sum_{j_1,\ldots , j_{pk/2}=-n \atop j_1\neq \cdots \neq j_{pk/2}}^n \prod_{r=1}^k \left(I_{J_r}\ \delta_{-1}\left( (-1)^{\sum_{q=1}^{p}(-1)^q j_q^r+n}\right)\right)\nonumber\\
&=\frac{1}{n^{pk/2}}\displaystyle \sum_{\pi \in \PP_2(kp)}\sum_{j_1,\ldots , j_{pk/2}=-n}^n \prod_{r=1}^k \left(I_{J_r}\ \delta_{-1}\left( (-1)^{\sum_{q=1}^{p}(-1)^q j_q^r+n}\right)\right)+o(1),
\end{align}
 where the last equality follows from Corollary \ref{cor:cardinality Uk} and our earlier observation that the contribution of partitions other than pair-partitions is $o(1)$.

For the rest of the proof, we fix a partition $\pi \in \mathcal{P}_2(pk)$. For the chosen $\pi \in \mathcal{P}_2(pk)$, consider $(J_1, J_2, \ldots , J_k)$ such that $J_{r}=\left(j_{1}^{r}, j_{2}^{r}, \ldots, j_{p}^{r}\right)$ where $j_q^r=j_{\pi \left((r-1)p+q\right)}$. For $s \neq r$, we define $J_{r,s}$ as the set of all cross-matched elements between $J_r$ and $J_s$, and $J_{r,r}$ as the set of all self-matched elements in $J_r$. As $\sum_{J_{r,r}} (-1)^qj_q^r$ is even, the parity of $\sum_{q=1}^p (-1)^q j_q^r$ is determined by
$\{\sum_{s \neq r} \sum_{J_{r,s}} (-1)^qj_q^r \}.$ Condition (ii) imposes that for all $r$, $\sum_{q=1}^p j_q^r$ must have same parity. We fix a particular combination of parity for $\sum_{j_q^r \in J_{r,s}} (-1)^q j_q^r$ such that $\sum_{s \neq r} \sum_{J_{r,s}}(-1)^q j_q^r$ is odd (or even) for all $r$. This problem can be translated to assigning a signed labelling for $G_\pi$. Notice that choosing the parity of $\sum_{J_{r,s}} (-1)^q j_q^r$ as odd (or even) is equivalent to labelling the edge $(J_r, J_s) \in E_{\pi}$ as $-1$ (or +1) (see Definition \ref{def:odd_even_labelling}). Hence, the condition $\sum_{s \neq r} \sum_{J_{r,s}}(-1)^q j_q^r$ is odd (or even) for all $r$, is equivalent to finding an all-odd (or all-even) labelling on $G_\pi$. Therefore, (\ref{eqn: Ewp^k xhi}) becomes
\begin{align}\label{eqn:Ew_p^k labelling}
	\E[w_p^k]= 
	\begin{cases}
		&\frac{1}{n^{pk/2}}\displaystyle \sum_{\pi \in \PP_2(kp)} \sum_{\gamma \in  G_\pi^{(ao)}}\sum_{j_1,\ldots , j_{pk/2} }\prod_{r=1}^k I_{J_r}+o(1)\quad \text{if }  n \text{ is even},\\
		&\frac{1}{n^{pk/2}}\displaystyle \sum_{\pi \in \PP_2(kp)}\sum_{\gamma \in  G_\pi^{(ae)}}\sum_{j_1,\ldots , j_{pk/2}} \prod_{r=1}^k I_{J_r} 
		+o(1) \quad \text{if } n \text{ is odd},
	\end{cases}
\end{align}
where  $G_\pi^{(ao)}$ and $G_\pi^{(ae)}$ are the set of all-odd labellings and all-even labellings of graph $G_\pi$, respectively.
In the above expression, the summation is taken over all $j_1,j_2,\ldots, j_{pk/2}$ obeying the system of equations
\begin{equation}\label{eqn:sys_labelling}
	(-1)^{\sum_{J_{r,s}} (-1)^q j_q^r}= \gamma\left((r,s)\right)\quad \text{ for all } (r,s) \in E_{\pi}.
\end{equation}

Let $\gamma$ be either an all-odd labelling or an all-even labelling of $G_\pi$.
Consider an edge $(r,s) \in E_\pi$. Let $j_u^{r,s} \in \{j_1,j_2, \ldots, j_{pk/2}\}$ be a cross-matched element in $(J_r,J_s)$. Observe that the equations in (\ref{eqn:sys_labelling}) are independent in the sense that each $j_q$ appears in at most one equation in (\ref{eqn:sys_labelling}). Therefore, once all other $j_q \in J_{r,s}$ except $j_u^{r,s}$ are fixed, the parity of $j_u^{r,s}$ is either odd or even, depending on the label $\gamma(r,s)$.
Furthermore the possible values of $j_u^{r,s}$ such that $\prod_{r}I_{J_r}$ is non-zero, belongs to an interval. As a consequence for each $(r,s) \in E_\pi$
$$\# \{ (J_1, J_2, \ldots, J_k): \prod_{r}I_{J_r}=1, j_u^{r,s} \text{ is odd}\}=\# \{ (J_1, J_2, \ldots, J_k):\prod_{r}I_{J_r}=1, j_u^{r,s} \text{ is even}\}+o(1).$$
%

Thus for a fixed all-odd (all-even) labelling, the reduction on number of possibilities of $(J_1,J_2, \ldots , J_k)$ is by a factor of $1/2^{\# E_\pi}$, with an error term of the order $o(1)$. Hence the contribution due to each edge-labelling is 
$$\sum_{j_1,j_2,\ldots , j_{pk/2}=-n}^n \frac{1}{2^{\# E_{\pi}}}\prod_{r=1}^k I_{J_r} + o(1),$$ 
which does not depend on $\gamma$.
Note from Lemma \ref{lemma:cyclomatic number} that the number of all-odd (and all-even) labellings of a cluster of $\pi$ is $2^{\#E-\#V+1}$, where $E$ and $V$ are the edge set and the vertex set of the cluster, respectively. Therefore the number of all-odd (all-even) edge labellings of $G_{\pi}$ is $2^{\#E_\pi-\#V_\pi+c}$, where $c$ is the number of connected components of $G_\pi$. Hence we get
\begin{align*}
\E[w_p^k]&= \frac{1}{n^{pk/2}}\displaystyle  \sum_{\pi \in \PP_2(kp)}2^{\#E_\pi-\#V_\pi+c}\sum_{j_1,j_2,\ldots , j_{pk/2}=-n}^n \frac{1}{2^{\#E_\pi}}\prod_{r=1}^k I_{J_r} +o(1)\\
&=\sum_{\pi \in \PP_2(kp)}\frac{1}{2^{m(\pi)}}\times \frac{1}{n^{pk/2}}\sum_{j_1,j_2,\ldots , j_{pk/2}=-n}^n \prod_{r=1}^k I_{J_r}+o(1).
\end{align*} 
Finally, (\ref{eqn:lim_moments_wp}) follows from Proposition \ref{pro: R_n limit}. This completes the proof.
\end{proof}
In the following section, we discuss the existence and uniqueness of the measure corresponding to the moment sequence $\{\beta_{k}\}$,  where $\{\beta_k\}$ is as in (\ref{eqn:lim_moments_wp}).
\subsection{Existence and uniqueness of measure:} \label{subsec:exi+uniq_Gammap}
It is known from Hamburger's theorem (Theorem 3.8, \cite{Konrad}) that a sequence $\{m_k\}$ is a moment sequence of a measure on $\mathbb{R}$ if and only if $\{m_k\}$ is a positive semi-definite sequence. 
We observe that  $\{\beta_k\}$ of Theorem \ref{thm:moments} is a positive semi-definite sequence. Consider a finite sequence $\{ c_1, c_2,\ldots , c_k \}$ of complex numbers. Then,
\begin{equation*}
	\sum_{i, j=1}^{k} c_{i} \bar{c}_{j} \beta_{i+j}=\sum_{i, j=1}^{k} c_{i} \bar{c}_{j}\lim_{n \rightarrow \infty} \E w_p^{i+j}=\lim_{n \rightarrow \infty} \sum_{i, j=1}^{k} c_{i} \bar{c}_{j} \E w_p^{i+j}.
\end{equation*}
Since $\{\E w_p^{i}\}$ is a moment sequence, $\sum_{i, j=1}^{k} c_{i} \bar{c}_{j} \E w_p^{i+j} \geq 0$ for each $n$ and as a result $\{\beta_{k}\}$ is a positive semi-definite sequence. Hence there exist a measure corresponding to $\{\beta_{k}\}$.
In the following lemma, we discuss the uniqueness of the moment sequence $\{\beta_{k}\}$.
\begin{lemma} \label{lem:existance_gammap}
	 Let $\{\beta_k\}$ be the moment sequence as given in (\ref{eqn:lim_moments_wp}). Then $\{\beta_{k}\}$ fails to obey Carleman's condition (Lemma 1.2.2, \cite{bose_patterned}).
\end{lemma}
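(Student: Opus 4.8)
\textit{The plan.} I would prove a lower bound on the even moments of the form $\beta_{2\ell}\ge (c_p\,\ell^{\,p})^{\ell}$ for some constant $c_p>0$, and then observe that, because $p\ge 3$, this growth forces the Carleman series $\sum_{\ell\ge1}\beta_{2\ell}^{-1/(2\ell)}$ to converge; since $\{\beta_k\}$ satisfies Carleman's condition exactly when this series diverges (the odd moments vanish and do not enter it), this is precisely the assertion that $\{\beta_k\}$ fails Carleman's condition. The two ingredients are a lower bound for $f_{2\ell}(\pi)$ that is \emph{uniform} over $\pi\in\mathcal{P}_2(2p\ell)$, and the elementary count $\#\mathcal{P}_2(2p\ell)=(2p\ell-1)!!$, which is already of order $\ell^{\,p\ell}$.

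\textit{A uniform lower bound for $f_k(\pi)$.} I would show that with $\delta_p:=\tfrac{1}{3p}\in(0,1)$ one has $f_k(\pi)\ge (2\delta_p)^{pk/2}$ for \emph{every} $\pi\in\mathcal{P}_2(pk)$. This follows by inspecting the integrand of Definition \ref{defn: partition integrals} near the origin: if $|y_i|<\delta_p$ for all $i$, then for each $r$ one has $x_r=\tfrac12\bigl(\sum_{q=1}^p(-1)^q y_{\pi((r-1)p+q)}+1\bigr)\in[\tfrac12-\tfrac{p\delta_p}{2},\tfrac12+\tfrac{p\delta_p}{2}]$, and hence every argument $x_r-\sum_{q=1}^{\ell}(-1)^q y_{\pi((r-1)p+q)}$ occurring in $U_r(p)$ lies in $[\tfrac12-\tfrac{3p\delta_p}{2},\tfrac12+\tfrac{3p\delta_p}{2}]=[0,1]$ by the choice of $\delta_p$. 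Thus on the cube $\{|y_i|<\delta_p\}\subseteq[-1,1]^{pk/2}$ all the indicator factors equal $1$, so $\prod_{r=1}^kU_r(p)\equiv1$ there, and integrating over this cube gives the claim.

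\textit{Assembling the estimate.} Using $m(\pi)=k-c\le k$, the formula (\ref{eqn:lim_moments_wp}) with $k=2\ell$ together with the uniform bound give
\[
\beta_{2\ell}=\sum_{\pi\in\mathcal{P}_2(2p\ell)}\frac{f_{2\ell}(\pi)}{2^{m(\pi)}}\ \ge\ 2^{-2\ell}\,(2\delta_p)^{p\ell}\,(2p\ell-1)!!\,.
\]
Plugging in the crude Stirling-type bound $(2m-1)!!=\tfrac{(2m)!}{2^m m!}\ge (2m/e^2)^m$ with $m=p\ell$ yields $\beta_{2\ell}\ge (a_p\,\ell^{\,p})^{\ell}$ for a positive constant $a_p$ depending only on $p$. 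Since $\beta_{2\ell}>0$ for all $\ell$, it follows that $\beta_{2\ell}^{-1/(2\ell)}\le a_p^{-1/2}\,\ell^{-p/2}$, and as $p\ge3$ gives $p/2>1$, the series $\sum_{\ell\ge1}\beta_{2\ell}^{-1/(2\ell)}$ converges, which completes the proof.

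\textit{Where the work is.} There is no hard analytic estimate here; the one point requiring the right idea is that one should discard \emph{no} structural information about $\pi$ — a bound on $f_{2\ell}(\pi)$ valid for all of $\mathcal{P}_2(2p\ell)$, combined with the $(2p\ell-1)!!\asymp\ell^{\,p\ell}$ available pair-partitions, already does the job. Restricting instead to partitions with bounded cluster size (where $f_{2\ell}$ factorizes into fixed positive constants) only produces $\beta_{2\ell}^{1/(2\ell)}=O(\sqrt\ell)$, which does not suffice; it is the super-exponential abundance of pair-partitions, effective only once $p>2$, that makes $\beta_{2\ell}^{1/(2\ell)}$ grow like $\ell^{p/2}$. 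For $p=1$ the same computation gives only $O(\sqrt\ell)$ growth, consistent with the Gaussian limit of $w_1$ recorded in Remark (i).
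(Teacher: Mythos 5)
Your proposal is correct and follows essentially the same route as the paper's proof: a lower bound on $f_k(\pi)$ that is uniform over $\pi\in\mathcal{P}_2(pk)$, obtained by restricting the integral to a small cube about the origin on which every indicator in $U_r(p)$ equals $1$ (your cube of half-width $\tfrac{1}{3p}$ versus the paper's $\tfrac{1}{8p}$ is an immaterial difference), combined with $2^{-m(\pi)}\ge 2^{-k}$, the count $\#\mathcal{P}_2(2pk)=\tfrac{(2pk)!}{2^{pk}(pk)!}$, and Stirling to force convergence of the Carleman series when $p\ge 3$. The only difference is that you carry out the Stirling estimate explicitly, which the paper leaves as a remark.
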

 \begin{proof}
 Consider $f_k(\pi)$ and $U_r(p)$ as defined in Definition \ref{defn: partition integrals}. Notice that for an even $k$, pair-partition $\pi \in \mathcal{P}_2(pk)$ and $y_1, y_2, \ldots, y_{\frac{pk}{2}} \in \left[-\frac{1}{8p}, \frac{1}{8p}\right]$,  $x_r$ defined as in Definition \ref{defn: partition integrals} belong to the range $\left[\frac{3}{8},\frac{5}{8}\right]$ for all $1 \leq r \leq k$. This implies that for $y_1, y_2, \ldots, y_{\frac{pk}{2}} \in \left[-\frac{1}{8p}, \frac{1}{8p}\right]$, $U_r(p)=1$ for each $r$ and consequently, $f_k(\pi) \geq \left(\frac{1}{4p}\right)^{\frac{pk}{2}}$ and hence
 \begin{align} \label{eqn:beta_gamma+moment}
 	\beta_{2k} &=  \sum_{ \pi \in \mathcal{P}_2(2pk)} \frac{1}{2^{m(\pi)}}f_{2k}(\pi) \nonumber \\
 	& \geq \left(\frac{1}{4p}\right)^{pk} \times \frac{1}{2^{2k}} \# \mathcal{P}_2(2pk) = \left(\frac{1}{4p}\right)^{pk} \times \frac{1}{2^{2k}} \frac{(2pk)!}{2^{pk}(pk)!}= \gamma_{2k}, \mbox{ say}.
 \end{align}
 It follows from Stirling's approximation that the sequence $\{\gamma_{2k}\}$ does not obey  Carleman's condition. As a consequence of the inequality (\ref{eqn:beta_gamma+moment}), $\{\beta_k\}$ also fails to obey  Carleman's condition. 	
 \end{proof}


In spite of this, if we additionally assume that $\Gamma_{p}$ is the unique distribution with moment sequence $\{\beta_{k}\}$, then by Theorem \ref{thm:moments} and moment method, $w_{p} \stackrel{d}{\rightarrow}  \Gamma_p$. Clearly, the limit $\Gamma_p$ is universal. Later, in Section \ref{sec:prop_Gammap}, we show that $\Gamma_p$ is non-Gaussian and has unbounded support.

In the following section, we study the fluctuations of linear eigenvalue statistics of Hankel matrices for polynomial test functions.
\subsection{Polynomial test function:} \label{subsec:polytest_ind}
First we recall from (\ref{eqn:wp_Hn_odd}) that for $Q(x)= \sum_{d=1}^{p} c_d x^d$,
\begin{align*}
	\Tr[Q(A_n)] = \sum_{d=1}^{p} c_d \Tr(A_n^d)= \sum_{d=1}^{p} c_dw_d= w_Q, \mbox{ say}.
\end{align*}

The following lemma provides the order of convergence of $\E[w_{p_1} w_{p_2}]$ when one of $p_i$ is even and other is odd.
\begin{lemma} \label{lem:Ew_evenodd}
	 Let $p_1$ be even and $p_2$ be odd positive integers, then $\E[w_{p_1} w_{p_2}] =O(\sqrt{n}).$
\end{lemma}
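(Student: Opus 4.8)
The plan is to expand $\E[w_{p_1}w_{p_2}]$ via the trace formula of Result~\ref{res:trace_Hn}, applied with $p=p_1$ (even case) for the first factor and $p=p_2$ (odd case) for the second factor. This gives
\[
\E[w_{p_1}w_{p_2}] = \frac{1}{n^{(p_1+p_2)/2}}\sum_{J_1, J_2} I_{J_1} I_{J_2} \Delta_{J_1}\Delta_{J_2}\,\E[x_{J_1}x_{J_2}],
\]
where $J_1$ ranges over vectors in $\{0,\pm1,\dots,\pm n\}^{p_1}$ satisfying $\sum_q (-1)^q j^1_q = 0$, and $J_2$ ranges over $A_{p_2}$ (so $\sum_q(-1)^q j^2_q = 2i_2-1-n$ for some $1\le i_2\le n$). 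As in the proof of Theorem~\ref{thm:moments}, since $\E[x_j]=0$ and $\E[x_j^2]=1$, for a nonzero contribution every element of the multiset $S_{J_1}\cup S_{J_2}$ must have multiplicity at least two, and by Corollary~\ref{cor:cardinality Uk} the terms with some multiplicity $\ge 3$ contribute only $O(n^{\lfloor (p_1+p_2-1)/2\rfloor - (p_1+p_2)/2}) = O(n^{-1/2})$ (here $p_1+p_2$ is odd, so $\lfloor(p_1+p_2-1)/2\rfloor = (p_1+p_2-1)/2$, giving exactly $O(n^{-1/2})$, which is certainly $O(\sqrt n)$). So it suffices to bound the contribution of pair-partitions $\pi\in\PP_2(p_1+p_2)$.

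Now I fix such a pair-partition $\pi$. Because $p_1+p_2$ is odd, by the last sentence of Definition~\ref{def:par_self+crossmatch} every $\pi\in\PP_2(p_1+p_2)$ is cross-matched: at least one block of $\pi$ straddles $S_{J_1}$ and $S_{J_2}$. The key counting step is to bound the number of admissible $(J_1,J_2)$. The free coordinates number $(p_1+p_2)/2$ (rounding: there are $\lfloor(p_1+p_2)/2\rfloor = (p_1+p_2-1)/2$ distinct values since $p_1+p_2$ is odd and each value is used at least twice — so actually at most $(p_1+p_2-1)/2$ distinct coordinate values). This already gives at most $O(n^{(p_1+p_2-1)/2})$ choices before imposing the constraint $\Delta_{J_1}$. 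But $\Delta_{J_1} = \delta_0(\sum_q (-1)^q j^1_q)$ is a genuine linear constraint on the coordinates appearing in $J_1$; I want to argue that, for a cross-matched $\pi$, this constraint is not automatically satisfied and removes one further degree of freedom, cutting the count to $O(n^{(p_1+p_2-1)/2 - 1})$ — wait, I should be careful: combined with the $1/n^{(p_1+p_2)/2}$ prefactor, $O(n^{(p_1+p_2-1)/2})$ already yields $O(n^{-1/2}) = O(\sqrt n)$ directly, without even needing the extra constraint. So the cleaner route is: the distinct-value bound $(p_1+p_2-1)/2$ alone suffices, since $n^{(p_1+p_2-1)/2}/n^{(p_1+p_2)/2} = n^{-1/2}$.

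Therefore the proof structure is: (1) write out the two-fold trace expansion; (2) discard multiplicity-$\ge 3$ terms via Corollary~\ref{cor:cardinality Uk}; (3) for pair-partitions, note $p_1+p_2$ odd forces at most $(p_1+p_2-1)/2$ distinct coordinate values, so the number of admissible $(J_1,J_2)$ is $O(n^{(p_1+p_2-1)/2})$, uniformly over the finitely many $\pi\in\PP_2(p_1+p_2)$; (4) since $|I_{J_r}|\le 1$ and $|\Delta_{J_r}|\le 1$ and $\E[x_{J_1}x_{J_2}]$ is uniformly bounded (indeed equals $1$ on these terms), conclude $\E[w_{p_1}w_{p_2}] = O(n^{(p_1+p_2-1)/2}/n^{(p_1+p_2)/2}) = O(n^{-1/2}) = O(\sqrt n)$. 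I expect the main (only mild) obstacle is bookkeeping the parity/floor arithmetic cleanly — making sure that "$p_1+p_2$ odd $\Rightarrow$ at most $\lfloor(p_1+p_2)/2\rfloor$ distinct values among coordinates each repeated $\ge 2$ times" is stated precisely, exactly as in the proof of Lemma~\ref{lem:card B_p} — but no genuinely new idea beyond the tools already developed is needed; in fact the stated bound $O(\sqrt n)$ is far from tight, the true order being $O(1/\sqrt n)$.
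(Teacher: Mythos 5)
There is a genuine gap: your expansion mishandles the even-degree factor. For even $p_1$, the trace formula in Result~\ref{res:trace_Hn} carries a sum over $i_1=1,\dots,n$ in which the Dirac factor is $\delta_0\bigl(\sum_{q}(-1)^q j_q^1\bigr)$ and hence does \emph{not} determine $i_1$; only the indicators $I_{J_1}$ constrain it. So after taking expectations one has
\[
\E[w_{p_1}w_{p_2}]=\frac{1}{n^{(p_1+p_2)/2}}\sum_{i_1=1}^{n}\ \sum_{J_1\in A'_{p_1},\,J_2\in A_{p_2}}\E[x_{J_1}x_{J_2}]\,I_{J_1}I_{J_2},
\]
and the free index $i_1$ contributes a factor of order $n$ that your displayed formula silently drops (in the odd case the delta fixes $i_r$ in terms of the $j$'s, which is why this issue never arises in Theorem~\ref{thm:moments}, but it is exactly the asymmetry this lemma is about). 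With the index count $O\bigl(n^{(p_1+p_2-1)/2}\bigr)$ for the matched $(J_1,J_2)$, the correct arithmetic is $n\cdot n^{(p_1+p_2-1)/2}/n^{(p_1+p_2)/2}=n^{1/2}$, which is the paper's bound. Your conclusion that the quantity is $O(n^{-1/2})$ and that ``the stated bound is far from tight'' is false: e.g.\ $\E[w_2w_3]\approx\E[w_2]\E[w_3]$ with $\E[w_2]\sim n$ and $\E[w_3]\sim n^{-1/2}$ is genuinely of order $\sqrt{n}$, and the paper relies on this order immediately afterwards to argue that polynomials containing an even-degree term have divergent moments.

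A secondary structural flaw: since $p_1+p_2$ is odd, $\PP_2(p_1+p_2)=\emptyset$, so your step ``discard multiplicity-$\ge 3$ terms, then bound the pair-partition contribution'' is vacuous — after discarding, nothing remains, and the entire content of the lemma sits in the multiplicity-$\ge 3$ terms you set aside. This is precisely how the paper argues: the dominant configurations have $J_1$ pair-matched and one entry of $J_2$ triple-matched (the rest pair-matched), giving $O\bigl(n^{p_1/2+(p_2-1)/2}\bigr)$ choices of indices, which together with the $O(n)$ freedom in $i_1$ and the normalization yields $O(\sqrt{n})$. Also note that Corollary~\ref{cor:cardinality Uk} is stated for the sets $A_{p}$ with $p$ odd, where choosing the $j$'s determines $i$; to apply the same counting to $J_1\in A'_{p_1}$ you must account separately for $i_1$, which is again the missing factor.
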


\begin{proof}
 From Result \ref{res:trace_Hn}, we get 
\begin{align} \label{eqn:E[wp1wp2]}
	\E[w_{p_1} w_{p_2}] =  \frac{1}{n^{\frac{p_1+p_2}{2}-1}} \sum_{J_1 \in A'_{p_1},J_2 \in A_{p_2}} \E[x_{J_1} x_{J_2}] I_{J_{1}} I_{J_{2}},
\end{align}
where $I_{J_r}, x_{J_r}$ are as in (\ref{eqn:J_I_X}), $A_{p_2}$ is as in (\ref{eqn:Ap_HnOdd}) and $A'_{p_1}$ is defined as 
$$  A'_{p_1}=\Big\{(j_{1}, j_2, \ldots, j_{p_1}) \in\left\{0, \pm 1, \ldots, \pm n\right\}^{p_1}: \sum_{q=1}^{p_1} (-1)^{q} j_{q}=0\Big\}.$$
Since the entries $x_i$ satisfy condition (\ref{eqn:condition}) and $p_2$ is odd, $\E[x_{J_1} x_{J_2}]$ has the maximum contribution if the entries of $J_1$ are pair-matched and one entry of $J_2$ is triple matched with the rest entries as pair-matched. Thus
\begin{equation*}
	\sum_{J_1 \in A'_{p_1},J_2 \in A_{p_2}} |\E[x_{J_1} x_{J_2}] I_{J_{1}} I_{J_{2}}| = O(n^{\frac{p_1}{2} + \frac{p_2-1}{2} }).
\end{equation*}
Now on combining the above expression with (\ref{eqn:E[wp1wp2]}), we get $\E[w_{p_1} w_{p_2}] =O(\sqrt{n}).$	
\end{proof}

In general, if at least one of $p_i$ is even for a given set of positive integers $p_1, p_2, \ldots, p_k$, then
\begin{equation*}
	\E[w_{p_1} w_{p_2} \cdots w_{p_k}] \geq O(\sqrt{n}).
\end{equation*}
This shows that if $Q(x)$ is a polynomial with at least one even degree term, then for every $k\geq 1$, $\E[\Tr[Q(A_n)]]^k$ is divergent.

Now suppose $\displaystyle Q(x)= \sum_{d=1, \atop d \ odd}^{p} c_d x^d$ is a polynomial with odd degree terms only. Then
\begin{align} \label{eqn:E[wQ]^k}
	\E[w_Q]^k = \sum_{D_k} c_{p_1} \cdots c_{p_k} \E[w_{p_1} w_{p_2} \cdots w_{p_k}],
\end{align}
where $D_k = \{(p_1, p_2, \ldots, p_k) : p_i \in \{1,3, \ldots, p\} \mbox{ for } \ i=1,2, \ldots,k \}.$

By the similar arguments as used in Theorem \ref{thm:moments}, we get 
\begin{equation} \label{eqn:lim_moments_wp_poly}
	\Gamma_{p_1,p_2, \ldots, p_k} := \lim_{n \rightarrow \infty} \E[w_{p_1} w_{p_2} \cdots w_{p_k}]=
	\begin{cases}
		\displaystyle \sum_{ \pi \in \mathcal{P}_2(p_1+ \cdots +p_k)} \frac{1}{2^{m(\pi)}}h_k(\pi) & \quad \mbox{if } k \text{ is even},\\
		0 & \quad \text{if } k \text { is odd},
	\end{cases}
\end{equation}
where $ h_k(\pi) $ will be some definite integral similar to  $f_k(\pi)$, given in Definition \ref{defn: partition integrals} and $m(\pi)$ is as given in (\ref{eqn:m pi}).
Using the above equation in (\ref{eqn:E[wQ]^k}), we get 
\begin{equation} \label{eqn:lim_moments_wQ}
	\tilde{\beta}_k	:= \lim_{n \rightarrow \infty} \E[w_{Q}]^k=
	\begin{cases}
		\displaystyle \sum_{D_k} c_{p_1} \cdots c_{p_k} \Gamma_{p_1,p_2, \ldots, p_k} & \quad \mbox{if } k \text{ is even},\\
		0 & \quad \text{if } k \text { is odd}.
	\end{cases}
\end{equation}

Note from (\ref{eqn:lim_moments_wQ}) that $\{\tilde{\beta}_k\}$ is a positive semi-definite sequence and therefore there exist measures on $\mathbb{R}$ with $\{\tilde{\beta}_k\}$ as its moment sequence. If there exists a unique distribution say, $\Gamma_Q$, which corresponds to $\{\tilde{\beta}_k\}$, then from the moment method, we have 
$$ w_{Q} \stackrel{d}{\rightarrow}  \Gamma_Q  \ \ \ \mbox{ if $Q(x)$ has only odd degree terms},$$
where $\E[\Gamma_Q]^k= \tilde{\beta}_k$. 

\section{Properties of $\Gamma_p$} \label{sec:prop_Gammap}
In this section, we study some properties of $\Gamma_p$, where $\Gamma_p$ is any distribution on $\mathbb{R}$ with moment sequence $\{\beta_k\}$. The following proposition shows that the moments of $\Gamma_p$ dominate the moments of the Gaussian distribution. 
\begin{proposition} \label{pro:not_normal_Hn}
A distribution $\Gamma_p$ with moment sequence given by (\ref{eqn:lim_moments_wp}) is a non-Gaussian distribution.
\end{proposition}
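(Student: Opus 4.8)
The plan is to show that $\Gamma_p$ cannot be Gaussian by comparing its moments $\{\beta_k\}$ with those of a centered Gaussian and finding a contradiction with the growth rate a Gaussian moment sequence must satisfy. Since $\beta_k = 0$ for odd $k$, any Gaussian candidate would have to be centered, i.e. $N(0,\sigma^2)$ with $\sigma^2 = \beta_2$. The moments of $N(0,\sigma^2)$ are $\beta_{2k}^{\mathrm{Gauss}} = (2k-1)!!\,\sigma^{2k}$, which grow only like $(Ck)^{k}$ for a fixed constant $C = C(\sigma)$. So it suffices to exhibit a faster-than-Gaussian lower bound on $\beta_{2k}$, and the bound already recorded in the proof of Lemma~\ref{lem:existance_gammap} does exactly this.

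First I would recall from (\ref{eqn:beta_gamma+moment}) that
\[
\beta_{2k} \ \geq\ \left(\frac{1}{4p}\right)^{pk}\cdot\frac{1}{2^{2k}}\cdot\frac{(2pk)!}{2^{pk}(pk)!}.
\]
Applying Stirling's formula, $(2pk)!/(pk)! $ behaves like $(2pk)^{pk}$ up to subexponential factors, so the right-hand side grows like $(c\,k)^{pk}$ for some constant $c = c(p) > 0$. Since $p \geq 3$, this is of order $(c k)^{pk} \gg (C k)^{k}$ for every fixed $C$, once $k$ is large. On the other hand, if $\Gamma_p$ were Gaussian it would be $N(0,\beta_2)$, whose $2k$-th moment is $(2k-1)!!\,\beta_2^{k} = O((C_0 k)^k)$ with $C_0$ depending only on $\beta_2$. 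Comparing the two growth rates gives $\beta_{2k} > (2k-1)!!\,\beta_2^{k}$ for all sufficiently large $k$, which contradicts the assumption that $\{\beta_k\}$ is the moment sequence of a Gaussian. Hence $\Gamma_p$ is non-Gaussian.

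I would also note a minor technical point to dispatch: a priori one should check $\beta_2 < \infty$ and $\beta_2 > 0$ so that the Gaussian comparison is meaningful (finiteness is automatic from (\ref{eqn:lim_moments_wp}) since $f_k(\pi)$ is an integral of an indicator over a bounded box, hence finite; positivity is automatic because $\beta_2$ is the limit of the variances $\frac{1}{n}\var(w_p)$, and one can exhibit a nonzero contribution, or simply observe that if $\beta_2 = 0$ then $\Gamma_p = \delta_0$, which is not what the statement rules out but is trivially non-Gaussian in the nondegenerate sense — so I would phrase the conclusion as: $\Gamma_p$ is not a nondegenerate Gaussian, and in fact not equal to any Gaussian $N(0,\sigma^2)$).

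\textbf{Main obstacle.} The only real content is the asymptotic comparison of the two factorial-type expressions, i.e. verifying via Stirling that the lower bound in (\ref{eqn:beta_gamma+moment}) outgrows $(2k-1)!!\,\beta_2^k$; this is where the hypothesis $p \geq 3$ enters, since for $p = 1$ the bound would only give growth comparable to the Gaussian one and the argument would (correctly) fail. Everything else — centering from vanishing odd moments, the formula for Gaussian moments — is routine, so I expect the write-up to be short.
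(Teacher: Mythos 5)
Your argument is correct, but it is a genuinely different route from the paper's. You prove that $\{\beta_k\}$ cannot be the moment sequence of \emph{any} Gaussian by a growth-rate comparison: reusing the lower bound (\ref{eqn:beta_gamma+moment}) from Lemma \ref{lem:existance_gammap}, you get $\beta_{2k}\geq c^k\,(k)^{pk}$ up to constants, which for $p\geq 3$ (indeed already $p\geq 2$) eventually dominates $(2k-1)!!\,\beta_2^k = O((Ck)^k)$, the moments forced on a centered Gaussian (centering follows from the vanishing odd moments); this is a contradiction, and it also disposes of the degenerate case $\beta_2=0$ since $\gamma_{2k}>0$. The paper instead works at fourth order: it returns to the trace formula, splits $\E[w_p^4]$ according to the cluster structure of $(J_1,J_2,J_3,J_4)$ (disconnected terms are $o(1)$, the three two-cluster configurations give $3(\E[w_p^2])^2$, and the fully connected cluster is shown, via an explicit family $B'_{P_4}$ of index choices with $\#B'_{P_4}\geq n^{2p}/8^3$, to contribute a strictly positive limit), concluding the quantitative inequality $\beta_4 > 3\beta_2^2$. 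What each buys: your proof is much shorter, piggybacks on a bound already established, and shows the stronger statement that no Gaussian can even match the tail/growth behaviour of $\{\beta_k\}$, with the role of $p\geq 3$ transparent (for $p=1$ the bound degenerates to Gaussian-type growth, as it must); the paper's proof yields concrete low-order information — an explicit positive excess kurtosis with a quantitative lower bound coming from the connected four-cluster — at the price of a longer combinatorial construction (which also implicitly uses $p\geq 3$). Either version feeds equally well into Corollary \ref{cor:non-Gaussian_limit}. The only blemish in your write-up is the phrase ``up to subexponential factors'' for $(2pk)!/(pk)!$ — the correction factor is exponential in $k$, not subexponential — but this is harmless because exponential factors are absorbed into the constant $c$ in $(ck)^{pk}$, and the decisive comparison $k^{pk}$ versus $k^k$ is unaffected.
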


\begin{proof}
First we recall that the moment sequence of $\Gamma_p$ is $\{\beta_k\}$, where $\{\beta_k\}$ is given in (\ref{eqn:lim_moments_wp}). Note that to show $\Gamma_p$ is non-Gaussian, it suffices to show that 
$\beta_{4} \neq 3 \beta_2^2$.

Now we calculate $\beta_4$. From (\ref{eqn:E(w_p^k)}), we have
\begin{equation} \label{eqn:E(w_p^4)}
\E[w_p^{4}]= \frac{1}{n^{2p}}\displaystyle \sum_{ A_p, A_p, A_p, A_p} \E[x_{J_1}x_{J_2} x_{J_3} x_{J_{4}}]  I_{J_1}I_{J_2}I_{J_3} I_{J_{4}},
\end{equation}
where $J_r \in A_p$ with $A_{p}$ as in (\ref{eqn:Ap_HnOdd}) and $J_r, I_{J_r}$, $x_{J_r}$ are as given in (\ref{eqn:J_I_X}) for each $r=1,2, 3,4$.


Recall the notions of connectedness and cluster from  Definition \ref{defn:cluster G pi}. Depending on connectedness between $J_{r}$'s, the following three cases arise in (\ref{eqn:E(w_p^4)}).
\vskip5pt
\noindent \textbf{Case I.} \textbf{At least one of $J_{r}$ for $r=1,2,3,4$, is not connected with the remaining ones:} 
Without loss of generality, suppose $J_{1}$ is not connected with other $J_{r}$'s. Then from the independence of  ${x_i}$, we get 
\begin{equation*}
\frac{1}{n^{2p}}\displaystyle \sum_{ A_p, A_p, A_p, A_p} \E[x_{J_1}x_{J_2} x_{J_3} x_{J_{4}}]  I_{J_1}I_{J_2}I_{J_3} I_{J_{4}} = \frac{1}{n^{2p}} \sum_{ A_p} \E[x_{J_1}]  I_{J_1} \sum_{ A_p, A_p, A_p}  \E[x_{J_2} x_{J_3} x_{J_{4}}]I_{J_2}I_{J_3} I_{J_{4}}.
\end{equation*}
Note that for each $i$, $\E[x_i]=0$ and that $J_1$ has odd many components.  Corollary \ref{cor:cardinality Uk} implies that $\sum_{ A_p} \E[x_{J_1}] I_{J_1}$ can have maximum contribution of the order $O(n^{\frac{p-1}{2}})$. Again, using Corollary \ref{cor:cardinality Uk}, we can show that $\sum_{ A_p, A_p, A_p}  \E[x_{J_2} x_{J_3} x_{J_{4}}] I_{J_2}I_{J_3} I_{J_{4}}$ can has contribution of the order at most $O(n^{\frac{3p-1}{2}})$. 
Thus, if $T_1$ is the contribution of this case to $\E[w_p^{4}]$, then
\begin{equation}\label{eqn:T1_nnormal}
T_1= o(1).
\end{equation}
 \vskip5pt
 \noindent \textbf{Case II.} \textbf{$J_{1}$ is connected with only one of $J_{2}, J_{3}, J_{4}$ and the remaining two of $J_{2}, J_{3}, J_{4}$  are connected only with each other:} Without loss of generality, we assume $J_{1}$ is connected only with $J_{2}$ and $J_{3}$ is connected only with $J_{4}$.  
So, from the independence of $\{x_i\}$, we get
\begin{align}\label{eqn:E_split_T2}
\frac{1}{n^{2p}}\displaystyle \sum_{ A_p, A_p, A_p, A_p} \E[x_{J_1}x_{J_2} x_{J_3} x_{J_{4}}]  I_{J_1}I_{J_2}I_{J_3} I_{J_{4}}
&= \frac{1}{n^{2p}} \sum_{A_p, A_p} \E[x_{J_1} x_{J_2}]  I_{J_1}I_{J_2} \sum_{A_p, A_p}  \E[ x_{J_3} x_{J_{4}}]  I_{J_3}I_{J_4} \nonumber \\
& = \E[w_p^{2}] \E[w_p^{2}].
\end{align}
Observe that in this case, there are two more subcases: 
\begin{itemize}
\item [(i)] $J_{1}$ is connected only with $J_{3}$ and $J_{2}$ is connected only with $J_{4}$,
\item[(ii)] $J_{1}$ is connected only with $J_{4}$ and $J_{2}$ is connected only with $J_{3}$.
\end{itemize}
 Therefore if we denote the contribution of this case to $\E[w_p^{4}]$ by $T_2$, then from (\ref{eqn:E_split_T2}), we get
\begin{equation}\label{eqn:T2_nnormal}
T_2= 3 \big(\E[w_p^{2}]\big)^2.
\end{equation}
\vskip5pt
 \noindent \textbf{Case III.} \textbf{$\{J_{1}, J_{2}, J_{3}, J_{4}\}$ form a cluster:}  
 Suppose $T_3$ is the contribution of this case to $\E[w_p^{4}]$. Then from the independence of $\{x_i\}$ and $\E(x_{i})=0$, we get
\begin{align} \label{eqn:T3} 
T_3
 =  \frac{1}{n^{2p}} \sum_{( J_{1}, J_{2}, J_{3}, J_{4}) \in \tilde{B}_{P_4}}  \E[x_{J_1}x_{J_2} x_{J_3} x_{J_{4}}]I_{J_1}I_{J_2}I_{J_3} I_{J_{4}},
\end{align}
 where  $J_r=(j^r_1, j^r_2, \ldots, j^r_p)$ for each $r=1,2,3,4$ and  $\tilde{B}_{P_4}$ is defined as,
 \begin{align*} 
\tilde{ B}_{p_4}=\{(J_{1}, J_{2}, J_{3}, J_{4}) \in & \ A_{p} \times A_{p} \times A_{p} \times A_{p}  : \{J_{1}, J_{2}, J_{3}, J_{4}\}  \mbox{ form a cluster and entries of }  \\
& \qquad  S_{J_{1}} \cup S_{J_{2}} \cup S_{J_{3}} \cup S_{J_{4}}  \mbox{ have multiplicity greater than or equal to two}
\}.
\end{align*}
Now consider $B'_{P_4}$, a subset of $\tilde{B}_{P_4}$ such that $(J_{1}, J_{2}, J_{3}, J_{4}) \in B'_{P_4}$ if
\begin{itemize}
\item[(i)] $j_1^1=j^2_1, j^1_2= j_1^3, j^1_3= j_1^4$, $j_1^1 \neq j^1_2 \neq j^1_3$, 
\item[(ii)] $j^1_{2d}= j^1_{2d+1}$ for all $d=2,3, \ldots, \frac{p-1}{2}$ and for $r=2,3,4, \ j^r_{2d}= j^r_{2d+1}$ for all $d=1,2, \ldots, \frac{p-1}{2}$,
\item[(iii)] $I_{J_1}I_{J_2}I_{J_3} I_{J_{4}}=1$.
\end{itemize}
\begin{localclaim}\label{claim: card_Bp4}
$\# B'_{P_4} \geq  \frac{n^{2p}}{8^3}$.
\end{localclaim}
\textbf{Proof.}  Note that if $(J_{1}, J_{2}, J_{3}, J_{4}) \in B'_{P_4}$, then from condition (ii), the entries of $J_r$ has the following constraints:
\begin{align} \label{eqn:j_i_constraint}
-j_1^1 +j_2^1 -j_3^1 = 2i_1-1-n, \ -j_1^1 = 2i_2-1-n, \ -j_2^1 = 2i_3-1-n, \  -j_3^1 = 2i_4-1-n,
\end{align}
where for a fixed $n$, $j_d^r \in \{0, \pm 1, \pm 2, \ldots, \pm n \}$ and $i_r \in \{1, 2, \ldots, n \}$ for each $r=1,2,3,4$. Also observe from the condition (iii) that
\begin{equation*}
	I_{J_r} = \prod_{\ell=1}^p \chi_{[1, n]}\left(i_r-\sum_{d=1}^{\ell} (-1)^d j^r_{d}\right)= 1 \ \ \forall \ r=1,2,3,4,
\end{equation*}
which shows that for all $r=1,2,3,4$, we have to choose $j_1^r, j^r_2, \ldots, j^r_p$ from $\{0, \pm 1, \pm 2, \ldots, \pm n \}$ and $i_r$ from $\{1, 2, \ldots, n \}$
such that 
\begin{equation*}
	i_r-\sum_{d=1}^{\ell} (-1)^d j^r_{d} \in \{1,2, \ldots,n\}  \ \ \forall \ r=1,2,3,4 \mbox{ and} \ \forall \ \ell=1,2, \ldots, p.
\end{equation*}

Now we calculate cardinality of $B'_{P_4} $. First note from (\ref{eqn:j_i_constraint}) that $i_2-i_3+i_4=i_1$. So, if we choose $i_1,i_2,i_3$ freely, then $i_4$ will be fixed.
Let 
\begin{align} \label{eqn:theta_i}
	\theta = \Big\{ (i_1,i_2,i_3,i_4) \in \big\{ [7n/16,9n/16] \cap \Z \big\}^3 \times \{1,2, \ldots, n\} \suchthat i_2-i_3+i_4=i_1  \Big\}.
\end{align}
Then $\# \theta \geq (\frac{n}{8})^3$ (by choosing  $i_1,i_2,i_3$ freely), where $[x,y] \cap \Z$ denotes the set of integers between $x$ and $y$. For simplicity of notation, we write $[x,y]$ in place of $[x,y]\cap \Z$.

 First we calculate the contribution from $J_2$. Recall from the condition (ii) that  $j^2_2=j^2_3, j^2_4=j^2_5, \ldots, j^2_{p-1}=j^2_p$. Note that, once $i_2$ is chosen, $j^2_1$ will be fixed by $-j_1^2 = 2i_2-1-n$ with $j^2_1 \in [-1- n/8, n/8-1] \subseteq [-n, n]$. Also $i_2+j^2_1  \in \{1,2, \ldots, n\}$.  Now we choose $j^2_2$ freely in $n$ ways from the range $[c^2_1-n, c^2_1-1] \subseteq [-n, n]$, where $c^2_1= i_2+j^2_1$. Then $i_2+j^2_1-j^2_2, \ i_2+j^2_1-j^2_2+j^2_3  \in \{1,2, \ldots, n\}$ ($j^2_2=j^2_3$). Once $j^2_2$ is chosen, we choose $j^2_4$ freely  in $n$ ways from the range $[c^2_1-n, c^2_1-1]$. Here  note that $i_2-\sum_{d=1}^{4} (-1)^d j^2_{d}, \  i_2-\sum_{d=1}^{5} (-1)^d j^2_{d}  \in \{1,2, \ldots, n\} $.  By continuing this idea, we can show that $i_2-\sum_{d=1}^{\ell} (-1)^d j^2_{d}  \in \{1,2, \ldots, n\} $ for each $\ell=6,7, \ldots, p$ and the total number of degree of freedom is $n^{\frac{p-1}{2}}$. Similarly, we can show that for each $J_3$ and $J_4$, we have  $i_3-\sum_{d=1}^{\ell} (-1)^d j^3_{d}, \ i_4-\sum_{d=1}^{\ell} (-1)^d j^4_{d}  \in \{1,2, \ldots, n\} $ for each $\ell=1, 2, \ldots, p$ and for each $J_3, J_4$, the total number of degree of freedom is $n^{\frac{p-1}{2}}$.
  
  Now we calculate the contribution from $J_1$. Recall from the condition (i) that  $j_1^1=j^2_1, j^1_2= j_1^3, j^1_3= j_1^4$. So, once $J_2,J_3,J_4$ are chosen, $j^1_1, j^1_2, j^1_3$ will be fixed with  $j^1_r \in [-1- n/8, n/8-1]$ for all $r=1,2,3$. Note that $i_1,i_2,i_3,i_4$ are also chosen as $(i_1,i_2,i_3,i_4) \in \theta$, where $\theta$ is given in (\ref{eqn:theta_i}). Therefore from the above range of $j_r^1$ and $i_r$, we can show that, $i_1+j^1_1, i_1+j^1_1-j^1_2, \ i_1+j^1_1-j^1_2+j^1_3  \in \{1,2, \ldots, n\}$. Now we choose $j^1_4, \ldots, j^1_p$.
 Note from the condition (ii) that  $j^1_4=j^1_5, j^1_6=j^1_7, \ldots, j^1_{p-1}=j^1_p$. Therefore from the similar idea as used to calculate carnality of $J_2$, we can show that $i_1-\sum_{d=1}^{\ell} (-1)^d j^1_{d}  \in \{1,2, \ldots, n\} $ for each $\ell=4,5, \ldots, p$ and the total number of degree of freedom in $J_1$ is $n^{\frac{p-3}{2}}$.
 Hence 
\begin{align}\label{eqn:card_B'P4}
\# B'_{P_4} =  \theta \big[ n^{\frac{p-3}{2}} \big] \big[ n^{\frac{p-1}{2}} \big]^3 \geq  \frac{n^{2p}}{8^3},
\end{align}
where the above inequality arises due to  $\theta \geq (\frac{n}{8})^3$.

On using the fact that the entries are independent with mean zero and variance one, from  (\ref{eqn:T3}) we get
\begin{align} \label{eqn:T3_nnormal}
\lim_{n\to\infty} T_3 \geq  \lim_{n\to\infty}  \frac{1}{n^{2p}} \# B'_{P_4} \geq \frac{1}{8^3} >0,
\end{align}
where the last inequality arises due to (\ref{eqn:card_B'P4}).

Now combining (\ref{eqn:T1_nnormal}), (\ref{eqn:T2_nnormal}) and  (\ref{eqn:T3_nnormal}) with (\ref{eqn:E(w_p^4)}), we get $\beta_4 > 3\beta_2^2.
$
This completes the proof of Proposition \ref{pro:not_normal_Hn}.
\end{proof}
The following corollary is an easy consequence of Proposition \ref{pro:not_normal_Hn} and Theorem 4.5.2 of \cite{KLChung}. 
\begin{corollary}\label{cor:non-Gaussian_limit}
For each odd $p \geq 3$, $w_p$ does not converge in distribution to a Gaussian random variable, where $w_p$ is as in (\ref{eqn:wp_Hn_odd}).
\end{corollary}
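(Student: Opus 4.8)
The plan is to argue by contradiction, using the moment bounds already assembled. Suppose that for some odd $p \geq 3$, $w_p$ converges in distribution to a Gaussian random variable $N(\mu,\tau^2)$. First I would observe from Theorem \ref{thm:moments} that the moment sequence $\{\beta_k\}$ is symmetric (odd moments vanish), so in particular $\beta_1 = 0$, forcing $\mu = 0$; and $\beta_2 = \tau^2$. The crucial input is that a convergence in distribution together with control of moments forces convergence of moments: this is exactly the content of Theorem 4.5.2 of \cite{KLChung}. To apply it I would need the family $\{w_p\}$ (over $n$) to be uniformly integrable in every power, i.e. $\sup_n \E|w_p|^{k+\delta} < \infty$ for each $k$; but this follows because $\E[w_p^{2m}]$ converges (hence is bounded in $n$) for every even $2m$ by Theorem \ref{thm:moments}, and $\E|w_p|^{k} \le 1 + \E[w_p^{2\lceil k/2\rceil}]$. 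Hence $\E[w_p^4] \to \beta_4$ and $\E[w_p^2] \to \beta_2$, so the limiting Gaussian $N(0,\tau^2)$ must satisfy $\beta_4 = \E[N(0,\tau^2)^4] = 3(\tau^2)^2 = 3\beta_2^2$.

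This is where Proposition \ref{pro:not_normal_Hn} delivers the contradiction: it establishes $\beta_4 > 3\beta_2^2$, so the Gaussian identity $\beta_4 = 3\beta_2^2$ cannot hold. Therefore no such Gaussian limit exists, and $w_p$ does not converge in distribution to a Gaussian random variable. I would also note that the argument does not presuppose that $w_p$ converges in distribution at all, nor that $\Gamma_p$ is the unique measure with moments $\{\beta_k\}$; it only rules out the Gaussian alternative among all possible weak limits, which is precisely what the corollary asserts.

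The only delicate point — and the step I would be most careful about — is the passage from weak convergence to convergence of the fourth moment. The subtlety is that weak convergence alone does not imply convergence of moments; one needs the uniform-integrability (equivalently, uniform boundedness of a slightly higher moment) that Theorem \ref{thm:moments} supplies via the convergence of all even moments. Once that is in hand, Theorem 4.5.2 of \cite{KLChung} closes the gap. Everything else is immediate: the comparison $\beta_4 > 3\beta_2^2$ is quoted from Proposition \ref{pro:not_normal_Hn}, and the Gaussian moment relation $\E[Z^4] = 3(\E[Z^2])^2$ for centered $Z$ is standard.
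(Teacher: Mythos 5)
Your proposal is correct and follows exactly the route the paper intends: the paper derives the corollary as "an easy consequence of Proposition \ref{pro:not_normal_Hn} and Theorem 4.5.2 of \cite{KLChung}," i.e.\ the moment inequality $\beta_4 > 3\beta_2^2$ combined with uniform integrability (from the convergence of all even moments in Theorem \ref{thm:moments}) rules out any Gaussian weak limit. Your write-up simply fills in the details of that same argument, including the correct handling of the mean and the uniform-integrability step.
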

Our next proposition provides the unbounded support property of $\Gamma_p$.

\begin{proposition} \label{pro:unbdd_supp_Hn}
A distribution $\Gamma_p$ with moment sequence given by (\ref{eqn:lim_moments_wp}) has unbounded support.
\end{proposition}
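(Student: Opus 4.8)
The plan is to argue by contradiction, using the lower bound on the even moments already established in the proof of Lemma~\ref{lem:existance_gammap}. Suppose, for contradiction, that some distribution $\Gamma_p$ with moment sequence $\{\beta_k\}$ were supported in a bounded interval $[-M,M]$. Then for every $k$ its $2k$-th moment would satisfy $\beta_{2k}=\int x^{2k}\,\mathrm{d}\Gamma_p(x)\le M^{2k}$, so that $\beta_{2k}^{1/(2k)}\le M$ for all $k$; in other words the sequence $\{\beta_{2k}^{1/(2k)}\}_k$ would be bounded. I will show this is impossible.

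Recall from \eqref{eqn:beta_gamma+moment} that $\beta_{2k}\ge\gamma_{2k}$, where $\gamma_{2k}=\bigl(\tfrac{1}{4p}\bigr)^{pk}\tfrac{1}{2^{2k}}\cdot\tfrac{(2pk)!}{2^{pk}(pk)!}$. Writing $m=pk$ and using $\tfrac{(2m)!}{2^{m}m!}=(2m-1)!!$, Stirling's approximation yields $\tfrac{(2m)!}{2^{m}m!}=\sqrt{2}\,\bigl(\tfrac{2m}{e}\bigr)^{m}(1+o(1))$ as $m\to\infty$. Substituting $m=pk$ and taking the $(2k)$-th root, one finds after cancelling the powers of $p$ that $\gamma_{2k}^{1/(2k)}=\tfrac12\bigl(\tfrac{k}{2e}\bigr)^{p/2}(1+o(1))$, which tends to $\infty$ as $k\to\infty$ since $p\ge3$. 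Hence $\beta_{2k}^{1/(2k)}\ge\gamma_{2k}^{1/(2k)}\to\infty$, contradicting the boundedness derived above. Therefore no bounded interval can contain $\supp(\Gamma_p)$, i.e.\ $\Gamma_p$ has unbounded support.

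It is worth noting that this is consistent with Lemma~\ref{lem:existance_gammap}: any probability measure with bounded support automatically obeys Carleman's condition, so the failure of Carleman's condition already forces the support to be unbounded; the argument above simply makes the requisite growth of the moments explicit. The only step needing a little care is the Stirling estimate and correctly tracking the constants through the various powers of $p$, but there is no genuine obstacle here — the proof reduces entirely to the inequality \eqref{eqn:beta_gamma+moment} together with elementary asymptotics.
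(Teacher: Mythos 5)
Your proof is correct, but it takes a different route from the paper's. You recycle the explicit lower bound $\beta_{2k}\ge\gamma_{2k}$ from (\ref{eqn:beta_gamma+moment}) in Lemma \ref{lem:existance_gammap} and extract the growth $\gamma_{2k}^{1/(2k)}\asymp \frac{1}{2}\bigl(\frac{k}{2e}\bigr)^{p/2}\to\infty$ by Stirling, which is incompatible with the bound $\beta_{2k}^{1/(2k)}\le M$ that a compactly supported measure would satisfy; your closing remark is also right that failure of Carleman's condition by itself already forces unbounded support, since any compactly supported measure has $\beta_{2k}^{-1/(2k)}\ge 1/M$ and hence satisfies Carleman. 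The paper instead goes back to the trace expansion (\ref{eqn:E(w_p^k)2}): it discards the terms where some $J_\ell$ is isolated, keeps only configurations where the $2k$ index vectors split into $k$ pairwise clusters, and deduces the fresh inequality $\beta_{2k}\ge \frac{(2k)!}{k!\,2^k}(\beta_2)^k$ together with $\beta_2>0$ (shown by the same counting as in (\ref{eqn:T3_nnormal})), which gives $(\beta_{2k})^{1/k}\to\infty$ directly, without a contradiction argument. What your approach buys is brevity and the reuse of an inequality already proved (and in fact a faster growth rate, $k^{p/2}$ versus the paper's $k^{1/2}$); what the paper's approach buys is a structurally meaningful bound relating $\beta_{2k}$ to $\beta_2$ via the cluster decomposition, independent of the specific constant in Lemma \ref{lem:existance_gammap}. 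Both arguments are valid proofs of the proposition.
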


\begin{proof}
First note that to show $\Gamma_p$ has unbounded support, it suffices to show that 
$(\beta_{2k})^{\frac{1}{k}} \tends  \infty$. Now we recall from (\ref{eqn:E(w_p^k)}) that
\begin{equation} \label{eqn:E(w_p^k)2}
\E[w_p^{2k}]= \frac{1}{n^{pk}}\displaystyle \sum_{J_1\in A_p, \ldots, J_{2k} \in A_p} \E[x_{J_1}x_{J_2}\cdots x_{J_{2k}}] I_{J_1}I_{J_2}\cdots I_{J_{2k}}.
\end{equation}
where  for $1 \leq r \leq 2k$, $J_r, I_{J_r}$ and $x_{J_r}$ are as in (\ref{eqn:J_I_X}) and $A_{p}$ is as in (\ref{eqn:Ap_HnOdd}).

Observe from the independence of the entries $\{x_i\}$ that if there exist an $\ell \in \{1,2, \ldots,2k\}$ such that $J_\ell$ is not connected with any other $J_r$, then
\begin{align*}
 \sum_{A_p, \ldots, A_p} |\E[x_{J_1}x_{J_2}\cdots x_{J_{2k}}] I_{J_1}I_{J_2}\cdots I_{J_{2k}}|	& \leq \sum_{ A_p} |\E[x_{J_1}]| \sum_{ A_p, \ldots, A_p}  |\E[x_{J_2} \cdots x_{J_{2k}}] | \\
 & \leq O(n^{\lfloor\frac{p}{2}\rfloor}) O(n^{\lfloor\frac{ 2pk-p}{2}\rfloor}),
\end{align*}
which shows that this case has contribution of the order $o(1)$ in $\E[w_p^{2k}]$.
Note that the last inequality in the above expression arises due to the uniform boundedness of moments of $\{x_i\}$ and Lemma \ref{lem:card B_p}.

The above observation shows that the cases which have non-zero contribution in $\lim_{n\to\infty}\E[w_p^{2k}]$ are the cases when $\{J_1, J_2, \ldots, J_{2k}\}$ decomposes into clusters of size at least two. Hence 
\begin{align*}
 \lim_{n\to\infty}\E[w_p^{2k}] & \geq  \lim_{n\to\infty}\frac{1}{n^{pk}} \sum_{\pi \in \mathcal P_2(2k)} \prod_{i=1}^{k}  \sum_{ J_{y(i)} \in A_{p},\ J_{z(i)} \in A_{p}} \E\big[x_{J_{y(i)}} x_{J_{z(i)}}\big] I_{J_{y(i)}}I_{J_{z(i)}}, \\
& =\sum_{\pi \in \mathcal P_2(2k)} \prod_{i=1}^{k} \lim_{n\tends \infty} \E[w^2_p],
 \end{align*}
where  $\pi = \big\{ \{y(1), z(1) \}, \ldots , \{y(k), z(k)  \} \big\}\in \mathcal P_2(2k)$.
Using  Theorem \ref{thm:moments}, from the above last equation, we get
\begin{equation*} 
\beta_{2k}  \geq \sum_{\pi \in \mathcal P_2(2k)} \prod_{i=1}^{k} \beta_2
= \frac{(2k)!}{k! 2^k} (\beta_2)^k.
  \end{equation*}
   Here note that $\beta_2>0$, which can be shown by the similar arguments as used to establish (\ref{eqn:T3_nnormal}). Therefore from the above inequality, $(\beta_{2k})^{\frac{1}{k}} \tends  \infty$. This completes the proof of Proposition \ref{pro:unbdd_supp_Hn}.
\end{proof}

Suppose $\Gamma_p$ is any distribution on $\mathbb{R}$ with moment sequence $\{\beta_k\}$, where $\{\beta_k\}$ is as in (\ref{eqn:lim_moments_wp}).
In the following theorem, we derive the covariance structure for $\{ \Gamma_p : p \text{ is odd postive interger}\}$. Note from Section \ref{subsec:exi+uniq_Gammap} that there could be more than one distribution corresponding to $\{\beta_k\}$. But the covariance structure is independent of the choice of the distributions. The argument used here is similar  to the one used in Theorem \ref{thm:moments} with some technical changes.

\begin{theorem} \label{thm:cov_Hn_odd}
	Let $p_1,p_2$ be odd natural numbers. Then for any input sequence $\{x_i\}$ obeying (\ref{eqn:condition})
	
	\begin{equation}\label{eqn:cov_Hn_odd}
		\lim_{n \rightarrow \infty} \Cov(w_{p_1},w_{p_2})= \frac{1}{2}\sum_{\pi \in {\PP}_2(p_1+p_2)} g_{p_1,p_2}(\pi),
	\end{equation}
where $g_{p_1,p_2}(\pi)$ is as given in Definition \ref{defn:cross-matched int}.
\end{theorem}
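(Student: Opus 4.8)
We plan to mimic the proof of Theorem~\ref{thm:moments}, specialised to the case in which the associated graph has only two vertices, and then to identify the resulting boundary integral with $g_{p_1,p_2}(\pi)$. First we reduce the covariance to a mixed moment: writing $\Cov(w_{p_1},w_{p_2})=\E[w_{p_1}w_{p_2}]-\E[w_{p_1}]\,\E[w_{p_2}]$, note that since $p_i$ is odd, in $\E[w_{p_i}]=n^{-p_i/2}\sum_{J\in A_{p_i}}\E[x_J]I_J$ every surviving $J$ has some entry of multiplicity at least three, hence at most $(p_i-1)/2$ distinct entries, so Corollary~\ref{cor:cardinality Uk} gives $\E[w_{p_i}]=O(n^{-1/2})$. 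Thus $\E[w_{p_1}]\E[w_{p_2}]=O(n^{-1})\to0$ and $\lim_{n\to\infty}\Cov(w_{p_1},w_{p_2})=\lim_{n\to\infty}\E[w_{p_1}w_{p_2}]$.

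Next we reduce to cross-matched pair-partitions. By Result~\ref{res:trace_Hn},
\begin{equation*}
\E[w_{p_1}w_{p_2}]=\frac{1}{n^{(p_1+p_2)/2}}\sum_{J_1\in A_{p_1},\,J_2\in A_{p_2}}\E[x_{J_1}x_{J_2}]\,I_{J_1}I_{J_2}.
\end{equation*}
Only $(J_1,J_2)$ for which every element of $S_{J_1}\cup S_{J_2}$ has multiplicity at least two contribute, and since $p_1+p_2$ is even, Corollary~\ref{cor:cardinality Uk} shows the terms in which some element has multiplicity at least three contribute $O(n^{-1})$. Hence only pair-partitions $\pi\in\PP_2(p_1+p_2)$ survive, with $\E[x_{J_1}x_{J_2}]=1$. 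Because $p_1$ is odd, the positions $\{1,\dots,p_1\}$ cannot be matched entirely within themselves, so every such $\pi$ is cross-matched (Definition~\ref{def:par_self+crossmatch}); its graph $G_\pi$ is therefore connected on $V_\pi=\{1,2\}$ with a single edge, and $m(\pi)=1$.

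We then handle the two Dirac constraints. For fixed $\pi$ and $(j_1,\dots,j_{(p_1+p_2)/2})$, the factor $\Delta_{J_r}=\delta_{2i_r-n-1}\big(\sum_q(-1)^q j_q^r\big)$ is realised by some $i_r\in\{1,\dots,n\}$ exactly when $\sum_q(-1)^q j_q^r\in[-(n-1),n-1]$ and $\sum_q(-1)^q j_q^r\not\equiv n\pmod 2$, and then $i_r=\tfrac12(\sum_q(-1)^q j_q^r+n+1)$. A cross-matched block contributes the same residue modulo $2$ to $\sum_q(-1)^q j_q^1$ and to $\sum_q(-1)^q j_q^2$, so the two parity requirements are automatically consistent; together they amount to assigning the unique all-odd label (if $n$ is even) or the unique all-even label (if $n$ is odd) to the single edge of $G_\pi$, of which by Lemma~\ref{lemma:cyclomatic number} there is $2^{\#E_\pi-\#V_\pi+1}=1$. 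As in the proof of Theorem~\ref{thm:moments}, fixing all cross-matched entries but one forces the parity of the last, so this parity constraint reduces the count of admissible $(J_1,J_2)$ by a factor $1/2^{\#E_\pi}=1/2$ up to a relative $o(1)$ error; combined with the single admissible labelling this produces the constant $1/2^{m(\pi)}=1/2$, and
\begin{equation*}
\E[w_{p_1}w_{p_2}]=\frac12\sum_{\pi\in\PP_2(p_1+p_2)}\frac{1}{n^{(p_1+p_2)/2}}\sum_{j_1,\dots,j_{(p_1+p_2)/2}=-n}^{n}I_{J_1}I_{J_2}+o(1),
\end{equation*}
where $j_q^1=j_{\pi(q)}$, $j_q^2=j_{\pi(p_1+q)}$ and $i_r=\tfrac12(\sum_q(-1)^q j_q^r+n+1)$.

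Finally, for each fixed $\pi$ the inner sum converges to $g_{p_1,p_2}(\pi)$: rescaling the $j$'s by $1/n$ turns $n^{-(p_1+p_2)/2}\sum I_{J_1}I_{J_2}$ into a Riemann sum of $\int_{[-1,1]^{(p_1+p_2)/2}}\tilde{U}_{p_1}\tilde{U}_{p_2}$, and the $O(1/n)$ mismatch between $\chi_{[1,n]}$ and the limiting $\chi_{[0,1]}$ costs only $O(n^{-1})$ by the one-variable interval-length estimate of Proposition~\ref{pro: R_n limit}; summing over $\pi$ then gives (\ref{eqn:cov_Hn_odd}). The main obstacle is precisely this last step: re-running the approximation of Proposition~\ref{pro: R_n limit} in the asymmetric setting $p_1\neq p_2$, where the two groups of variables have different lengths $p_1$ and $p_2$, and verifying that the pair of Dirac constraints remains simultaneously solvable on a set of full asymptotic size so that no further loss of dimension occurs; everything else is a two-vertex specialisation of the argument for Theorem~\ref{thm:moments}.
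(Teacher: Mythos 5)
Your proposal is correct and follows essentially the same route as the paper: reduce to cross-matched pair-partitions of $[p_1+p_2]$, observe that the two parity constraints from the Dirac factors coincide and cost a single factor $\tfrac12$ (the paper argues this directly via one cross-matched element $j_{k_0}$ rather than through the labelling count of Lemma \ref{lemma:cyclomatic number}, but it is the same two-vertex specialisation of the argument for Theorem \ref{thm:moments}), and then pass to the Riemann sum giving $g_{p_1,p_2}(\pi)$. The final approximation step you flag as the main obstacle is handled in the paper exactly as you suggest, by the factor-by-factor boundary estimate of Proposition \ref{pro: R_n limit}, which does not use $p_1=p_2$, and your preliminary bound $\E[w_{p_i}]=O(n^{-1/2})$ is a harmless variant of the paper's observation that $\E[x_{J_1}]\E[x_{J_2}]=0$ on the surviving terms.
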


\begin{proof}
	  First note from (\ref{eqn:wp_Hn_odd}) and Result \ref{res:trace_Hn} that
	\begin{equation}\label{eqn:cov_limit}
		\lim_{n \rightarrow \infty} \Cov(w_{p_1},w_{p_2})= \lim_{n \rightarrow \infty} \frac{1}{n^{\frac{p_1+p_2}{2}}} \sum_{J_1 \in A_{p_1},J_2 \in A_{p_2}} \left(\E[x_{J_1} x_{J_2}]-\E[x_{J_1}] \E[x_{J_{2}}]\right) I_{J_{1}} I_{J_{2}},
	\end{equation}
	where $A_{p_r}$ is as in (\ref{eqn:Ap_HnOdd}) and $J_r, I_{J_r}, x_{J_r}$ are as in (\ref{eqn:J_I_X}) for $r=1,2$.
	Now observe that the summand in (\ref{eqn:cov_limit}) is non-zero only when 
	\begin{enumerate}[(a)]
		\item each element of $S_{J_1} \cup S_{J_2}$ is repeated at least twice and
		\item there exists at least one cross-matching in $(J_1,J_2)$. 
	\end{enumerate}
	An argument similar to the one employed in the proof of Theorem \ref{thm:moments} implies that only the pair-partitions of $[p_1+p_2]$ contribute in (\ref{eqn:cov_limit}). Furthermore, since $p_1,p_2$ are odd, every pair-partition has at least one cross-matching, and hence $\E[x_{J_1}x_{J_2}]=1$ and $\E[x_{J_1}]=\E[x_{J_2}]=0$ in such cases. As a result, (\ref{eqn:cov_limit}) can be written as
	\begin{equation}\label{eqn:cov exp2}
\lim_{n \rightarrow \infty}\Cov(w_{p_1},w_{p_2})= \lim_{n \rightarrow \infty}\frac{1}{n^{\frac{p_1+p_2}{2}}}\displaystyle \sum_{\pi \in \PP_2(p_1+p_2)}\sum_{j_1,j_2,\ldots , j_{\frac{p_1+p_2}{2}}=-n}^n \sum_{i_1,i_2=1}^n I_{J_1} I_{J_2}\Delta_{J_1}\Delta_{J_2},
	\end{equation}
where $J_1 = (j_1^1,j_2^1,\ldots , j_{p_1}^1)$ with $j_s^1= j_{\pi\left(s\right)}$, $J_2 = (j_1^2,j_2^2,\ldots , j_{p_2}^2)$ with $j_s^2= j_{\pi\left(p_1+s\right)}$ and for $r=1,2$,
 $$I_{J_r}= \prod_{\ell=1}^{p_r} \chi_{\left[\frac{1}{n}, 1\right]}\left(\frac{i_r}{n}-\sum_{q=1}^{\ell}(-1)^q \frac{j_{\pi \left((r-1)p_1+q\right)}}{n}\right), \ \Delta_{J_r}=\delta_{\frac{2i_r}{n}-1-\frac{1}{n}} \left( \sum_{q=1}^{p_r}(-1)^{q} \frac{j_{\pi \left((r-1)p_1+q\right)}}{n}\right).$$

 For a fixed $\pi \in \PP_2(p_1+p_2)$ and $(J_1,J_2)$, $\Delta_{J_i}$ is non-zero only for the solution $i_r \in [n]$ of the following equation 
\begin{equation}\label{eqn:ir, Jr condition}
\sum_{q=1}^{p_r}(-1)^{q} j_{\pi \left((r-1)p_1+q\right)}= 2 i_r-n-1.
\end{equation} 
Consider a combination of $i_r, J_r$ obeying (\ref{eqn:ir, Jr condition}) such that $I_{J_r}=1$. Then each term in the product form of $I_{J_r}$ is equal to 1, which in turn implies that $1 \leq i_r - \sum_{q=1}^{p_r} j_{\pi \left((r-1)p_1+q\right)} \leq n$. Substituting the expression of $i_r$ from (\ref{eqn:ir, Jr condition}), it follows that $i_r$ lies between $1$ and $n$. So now our objective is to find integer solutions of (\ref{eqn:ir, Jr condition}).

Again, by (\ref{eqn:ir, Jr condition}), $i_r$ is an integer if and only if $n$ and $\sum_{q=1}^{p_r} j_{\pi \left((r-1)p_1+q\right)}$ have different parity. Hence the right side of (\ref{eqn:cov exp2}) becomes
\begin{equation}\label{eqn:cov exp 3}
\lim_{n \rightarrow \infty}\frac{1}{n^{\frac{p_1+p_2}{2}}}\displaystyle \sum_{\pi \in \PP_2(p_1+p_2)}\sum_{j_1,j_2,\ldots , j_{\frac{p_1+p_2}{2}}=-n}^n \prod_{r=1}^2 I_{J_r} \ \delta_{-1} \left((-1)^{\sum_{q=1}^{p_r}(-1)^q j_{\pi\left((r-1)p_1+q\right)}+n} \right).
\end{equation}
For $\pi \in \PP_2(p_1+p_2)$, consider the sum $\sum_{q=1}^{p_r}(-1)^{q} j_{\pi \left((r-1)p_1+q\right)}$. For $r=1,2$, we shall use $J_{r,r}$ to denote the self-matching elements in $J_r$ and $J_{1,2}$ to denote the cross-matching elements in $(J_1,J_2)$. Since, the sum $\sum_{j_q^r \in J_{r,r}}(-1)^{q} j_q^r$ is always even for $r=1,2$, the parity of $\sum_{q=1}^{p_r}(-1)^q j_{\pi\left((r-1)p_1+q\right)}$ is same as the parity of $\sum_{j_q^r \in J_{1,2}}(-1)^{q} j_q^r$ for both $r=1,2$.

%
Now, consider $(i_r,J_r)$ with $I_{J_r}>0$ and $\sum_{q=1}^{p_r}(-1)^{q} j_{\pi \left((r-1)p_1+q\right)}=2i_r-n-1$.
Let $j_{k_0}$ be a cross-matched element in $(J_1,J_2)$. Once all other $j_q^r$ are fixed, the parity of $j_{k_0}$ is restricted to either even or odd, depending upon $n$. Also, once all other $j_q^r$ are fixed, the possible values of $j_{k_0}$ such that $I_{J_1}I_{J_2}=1$ falls in an interval. This reduces the number of possibilities for $j_{k_0}$ by half with an error of at most 1. Thus from (\ref{eqn:cov exp 3}),
\begin{align} \label{eqn:limcov_wpq}
\lim_{n \rightarrow \infty}\Cov(w_{p_1},w_{p_2})&= \lim_{n \rightarrow \infty} \Big[\frac{1}{2} \times \frac{1}{n^{\frac{p_1+p_2}{2}}}\displaystyle \sum_{\pi \in \PP_2(p_1+p_2)}\sum_{j_1,j_2,\ldots , j_{\frac{pk}{2}}=-n}^n \prod_{r=1}^2 I_{J_r} + \frac{1}{n^{\frac{p_1+p_2}{2}}} \times O(n^{\frac{p_1+p_2}{2}-1}) \Big] \nonumber \\
&= \frac{1}{2}\sum_{\pi \in \PP_2(p_1+p_2)} g_{p_1,p_2}(\pi),
\end{align}
where $g_{p_1,p_2}(\pi)$ is as given in Definition \ref{defn:cross-matched int}. Note that the last expression is obtained by considering $I_{J_1}I_{J_2}$ as a Riemann sum of the integral $g_{p_1,p_2}(\pi)$.  
\end{proof}

\section{Conclusion}

Our research shows that the behaviour of linear eigenvalue statistics of random Hankel matrices ($w_p$) for odd degree monomials with degree greater than or equal to three is significantly different from the behaviour of linear eigenvalue statistics of random Hankel matrices for even degree monomials and that of linear eigenvalue statistics of Toeplitz matrices.

First for the monomial test functions, we have shown that the moments of linear eigenvalue statistics of Hankel matrix ($\E[w_p]^k$) converge to a limit sequence $\{ \beta_k \}$ as $n$ tends to $\infty$, where each $\beta_k$ is finite. We also showed that there exist probability measures on $\mathbb{R}$ with moment sequence $\{ \beta_k \}$. We proved that any probability measure $\Gamma_p$ with $\{\beta_k \}$ as moment sequence is non-Gaussian and has unbounded support. The behaviour of linear eigenvalue statistics of Hankel matrices for polynomial test functions were discussed in Section \ref{subsec:polytest_ind}.
The simulations in Figure \ref{fig:Limiting_dist} suggest that the linear eigenvalue statistics converge in distribution to a unique limit which is symmetric, unimodular and absolutely continuous. But establishing it theoretically, is difficult, because the moment sequence $\{ \beta_k \}$ might not determine a unique distribution on $\mathbb{R}$. Here the moment sequence $\{ \beta_k \}$ fails to obey Carleman's condition, see Section \ref{subsec:exi+uniq_Gammap}. 
 In this article, we could not conclude whether there is a unique distribution, which corresponds to $\{\beta_k\}_{k\geq 1}$. But we concluded that $w_p$ does not converge in distribution to a Gaussian random variable (Corollary \ref{cor:non-Gaussian_limit}). The question of uniqueness of $\{ \beta_k \}$ along with the convergence in distribution of $w_p$, is still open.



\providecommand{\bysame}{\leavevmode\hbox to3em{\hrulefill}\thinspace}
\providecommand{\MR}{\relax\ifhmode\unskip\space\fi MR }
\providecommand{\MRhref}[2]{%
	\href{http://www.ams.org/mathscinet-getitem?mr=#1}{#2}
}
\providecommand{\href}[2]{#2}

\end{document}